\newtheorem{theorem}{Theorem}[section]
\newtheorem{lemma}[theorem]{Lemma}
\newtheorem{proposition}[theorem]{Proposition}
\newtheorem{corollary}[theorem]{Corollary} 
\theoremstyle{definition}
\newtheorem{definition}[theorem]{Definition}
\newtheorem{idea}[theorem]{Idea}
\theoremstyle{remark}
\newtheorem{remark}[theorem]{Remark}
\newtheorem{example}[theorem]{Example}
\newtheorem{case}{Case}
\newtheorem{subcase}{Case}
\numberwithin{subcase}{case}
\DeclareMathOperator{\supp}{supp}
\DeclareMathOperator{\core}{core}
\DeclareMathOperator{\pos}{pos}
\DeclareMathOperator{\Sym}{Sym}
\tikzstyle{vertex}=[circle, draw, inner sep=0pt, minimum size=6pt]
\newcommand{\vertex}{\node[vertex]}
\subjclass[2000]{68R15, 05C75, 68R10}
\begin{document}

\title{Parikh Word Representability of Bipartite Permutation Graphs}
\author{Wen Chean Teh, Zhen Chuan Ng, Muhammad Javaid, Zi Jing Chern}
\address{School of Mathematical Sciences\\
Universiti Sains Malaysia\\
11800 USM, Malaysia}
\email[Corresponding author]{dasmenteh@usm.my}
\address{School of Mathematical Sciences\\
	Universiti Sains Malaysia\\
	11800 USM, Malaysia}
\email{zhenchuanng@usm.my}
\address{Department of Mathematics, School of Science, University of Management and Technology,  Lahore, Pakistan}
\email{javaidmath@gmail.com}
\address{School of Mathematical Sciences\\
	Universiti Sains Malaysia\\
	11800 USM, Malaysia}
\email{jing\_12364@hotmail.com}
\keywords{Parikh word representable graph; Bipartite permutation graph; Parikh matrices; Diameter; Hamiltonian cycle}

\maketitle

\begin{abstract}
The class of Parikh word representable graphs were recently introduced. In this work,
we further develop its general theory beyond the binary alphabet. Our main result shows that this class is equivalent to the class of bipartite permutation graphs. Furthermore, we study certain graph theoretic properties of these graphs in terms of the arity of the representing word.
\end{abstract}

\section{Introduction}

The theory of intersection graphs is well-studied and many important graph families are special classes of intersection graphs (see \cite{mckee1999topics}).
Intersection graphs of sets of line segments in the plane are particularly interesting because it was conjectured by Sheinerman and proved by Chalopin and Gon{\c{c}}alves \cite{chalopin2009every} that every planar graph is such an intersection graph.
The class of permutation graphs is equivalent to a very special subclass of this class, where the endpoints of the line segments lie on two parallel lines.
In 2010 Spinrad~et al.~\cite{spinrad1987bipartite} gave the first characterization of bipartite permutation graphs and since then this class has been proven to be equivalent to a few other natural classes of graphs, for example, the proper interval bigraphs \cite{hell2004interval}.


The theory of word representable graphs is a very young but well-established research area, which relates graph theory to combinatorics on words. 
An excellent survey of the state of the art would be \cite{kitaev2015words}.
Parikh word representable graphs, independent from word representable graphs, were introduced recently \cite{bera2016structural}
as a new approach to study words using graphs and vice versa.
It was proven that every Parikh binary word representable graph is a bipartite permutation graph. In this work, we generalize this result to arbitrary ordered alphabets and show that up to isomorphism, a graph is Parikh word representable if and only if it is a bipartite permutation graph. Therefore, we obtain a new characterization of the class of bipartite permutation graphs.

Parikh matrices \cite{mateescu2001sharpening} was introduced in 2001 as an extension of the classical Parikh vectors \cite{parikh1966context}. The injectivity problem, which asks when are two words having the same Parikh matrices, is still open even for the ternary alphabet for almost two decades
and thus received our attention lately (for example, see \cite{poovanandran2018elementary, teh2016conjecture,  teh2018strongly, teh2018order}).  The definition of Parikh word representable graph is originally motivated by Parikh matrices. In fact, it is closely related to the definition of core of a word introduced in \cite{teh2015coreb} to study the injectivity problem (see Remark~\ref{0910a}).

The remainder of this paper is structured as follows. Section 2 provides the basic terminology and preliminaries. Section 3 reviews the basics of Parikh word representable graphs and also presents some new observations. In Section~4, we present our main result that says that the Parikh word representable graphs are exactly the bipartite permutation graphs. Hence, this induces a hierarchy for bipartite permutation graphs based on the arity of representing words. Subsequently, the upper bound of diameters of Parikh graphs  representable by $n$-ary words is studied in Section~5. Before, our conclusion, in Section 6, we restrict our focus to the binary and ternary cases and extend some results from \cite{bera2016structural} analogously to the ternary alphabet. In particular, the necessary and sufficient condition for a Parikh ternary word representable graph to have a Hamiltonian cycle is obtained. 

\section{Preliminaries}

To our best knowledge, word or graph theoretic terminology used in this work but not detailed here are the standard ones.

Suppose $\Sigma$ is an alphabet. The set of words over $\Sigma$ is denoted by $\Sigma^*$. The empty word is denoted by $\lambda$. 
Let $\Sigma^+$ denote the set $\Sigma^*\backslash\{\lambda\}$.
An \emph{ordered alphabet} is an alphabet with an ordering on it, for example, $\{a_1<a_2< \dotsb<a_s\}$.
An ordered alphabet and its underlying alphabet can both be denoted by $\Sigma$. For $1\leq i\leq j \leq s$, let $a_{i,j}$ denote the word $a_ia_{i+1}\dotsm a_j$.
Suppose $\Gamma\subseteq \Sigma$. The projective morphism $\pi_{\Gamma}\colon \Sigma^*\rightarrow \Gamma^*$ is defined by
$$\pi_{\Gamma}(a)=\begin{cases}
a, & \text{if } a\in \Gamma\\
\lambda, & \text{otherwise}
\end{cases}.$$
We may write $\pi_{a,b}$ for $\pi_{\{a,b\}}$.

Suppose $w\in \Sigma^*$. The length of $w$ is denoted by $\vert w\vert$.
A word $w'$ is a \emph{subword} of $w\in \Sigma^*$ if and only if there exist $x_1,x_2,\dotsc, x_n$, $y_0, y_1, \dotsc,y_n\in \Sigma^*$, possibly empty, such that
	$$w'=x_1x_2\dotsm x_n \text{ and } w=y_0x_1y_1\dotsm y_{n-1}x_ny_n.$$
We say that $ u $ is a \emph{factor} of $ w $ if and only if there exist $ x, y \in \Sigma^* $ such that $ w = xuy $. If $ x $ (respectively $ y $) is the empty word, then $ u $ is called a \emph{prefix} (respectively \emph{suffix}) of $ w $. The number of occurrences of a word $u$ as a subword of $w$ is denoted by $\vert w\vert_u$. 
Two occurrences of $u$ are considered different if and only if they differ by at least one position of some letter. By convention, $\vert w\vert_{\lambda}=1$ for all $w\in \Sigma^*$. The \emph{support} of $w$, denoted $\supp(w)$, is the set $\{ a\in \Sigma\mid \vert w\vert_a \neq 0 \}$.

\begin{definition}
	Suppose $\Sigma$ is an alphabet, $w\in \Sigma^*$ and $a\in \supp(w)$. For every $1\leq k\leq \vert w\vert_a$, let $\pos_a(w,k)$ denote the position of the $k$-th character in $w$ that is $a$.
\end{definition}

\begin{example}
	$\pos_b(abbaba,2)=3$ while $\pos_{a}(caabcaba,3)=6$.
\end{example}

\begin{definition}\cite{teh2015coreb}
	Suppose $\Sigma$ is any alphabet and $v,w\in\Sigma^*$. The {\it v-core} of $w$, denoted by $\core_v(w)$, is the unique subword $w'$ of $w$ such that $w'$ is the subword of the shortest length which satisfies $|w'|_v=|w|_v$.
\end{definition}
In other words, $\core_v(w)$, is the subword of $w$ consisting of letters in $w$ that contribute to the value of $\vert w\vert_v$. 

\begin{example}
	Consider the word $w=bacbbabcccbac$. Then $\core_b(w)=bbbbb$, $\core_{ab}(w)=abbabb$, $\core_{bc}(w)=bcbbbcccbc$, $\core_{abc}(w)=abbabcccbc$,  $\core_{cab}(w)=cabb$, and $\core_{cca}=cccca$.
\end{example}

We will be working only with undirected simple graphs with no loops and multiple edges. A \emph{graph} $G=(V,E)$ is a pair consisting of a set $V$ of \emph{vertices}, denoted by $V(G)$, and a set $E$ of (undirected) \emph{edges}, denoted by $E(G)$.  
Suppose $x,y\in V$ are vertices of $G$.
If $x$ and $y$ are adjacent, we let $(x,y)$ denote the edge connecting   
$x$ and $y$ and identify it with  $(y,x)$. The \emph{open neighborhood} of $x$ is denoted by $N(x)$. The \emph{degree} of $x$, denoted $\deg(x)$, is the number of vertices \emph{adjacent} to $x$.
The \emph{distance} between $x$ and $y$
is denoted by $d(x,y)$. The \emph{diameter} of a graph is the greatest distance between any pair of vertices.
The \emph{induced subgraph} of $G$ induced by the subset $V'\subseteq V$ is denoted by $G[V']$. 
A \emph{bipartite graph} whose partition has the parts $X$ and $Y$ is denoted by $G=(X,Y,E)$.
Note that if a bipartite graph $G=(X,Y,E)$ is complete, then $E=X\times Y$.
Finally, a \emph{Hamiltonian cycle} in $ G$ is a cycle that visits  every vertex exactly once.
A graph $G$ is a $(6,2)$ \emph{chordal graph} if  every cycle of $G$ of length at least six has at least two chords.


\begin{definition}
A graph $G=(V,E)$ is a \emph{permutation graph} if and only if there is an ordering $v_1,v_2, \dotsc, v_n$ of the vertices of $V$ such that there is permutation $\tau$ of the numbers from $1$ to $n$ with the property that for all integers $1\leq i<j\leq n$
$$(v_i,v_j)\in E \text{ if and only if }
\tau(i)>\tau(j).$$
\end{definition}

\begin{remark}\label{0409a}
Every induced subgraph of a permutation graph is a permutation graph. Also, a graph is a permutation graph if and only if every of its connected components is a permutation graph.  
\end{remark}

Finally, if $<$ is a (linear) ordering on a set $A$ and $X,Y\subseteq A$, then $X< Y$ means  that $x<y$ for every $x\in X$ and $y\in Y$.

\section{Basics on Parikh Word Representable Graphs}

\begin{definition}\cite{bera2016structural}\label{2409}
Suppose $\Sigma=\{a_1<a_2<\dotsb < a_s\}$ is an ordered alphabet and $w=w_1w_2\dotsm w_n$ is a nonempty word of length $n$ over $\Sigma$ with $w_i\in\Sigma$ for all $1\leq i\leq n$. 
Define the \emph{Parikh graph of $w$ over $\Sigma$}, denoted $\mathcal{G}(w)$, with set of vertices $\{1,2,3, \dotsc, n\}$ and for all $1\leq i<j \leq n$, the vertices $i$ and $j$ are adjacent if and only if for some $1\leq k\leq s-1$, we have $w_i=a_k$ and $w_j=a_{k+1}$. 
\end{definition}

In other words, to every occurrence of the  subword $a_ka_{k+1}$ in $w$, where $1\leq k \leq s-1$, there is an edge between the corresponding vertices in $\mathcal{G}(w)$.

\begin{remark}\label{0910a}
In \cite{teh2015coreb} $\core_{\Sigma}(w)$ is defined to be the unique subword of $w$ consisting of letters that contribute to the value of $\vert w\vert_{a_ka_{k+1}}$ for some
$1\leq k\leq s-1$. Hence, the definition of Parikh graph of $w$ resembles that of $\core_{\Sigma}(w)$. In fact, it is easy to see that 
$\mathcal{G}(\core_{\Sigma}(w))$ is isomorphic to the subgraph of $\mathcal{G}(w)$ induced by the set of non-isolated vertices. 
\end{remark}

According to the definition, over a given ordered alphabet, the Parikh graph of a word is unique. However, distinct words may have isomorphic Parikh graphs. For example, $\mathcal{G}(abb)$ and $\mathcal{G}(abc)$ are isomorphic over the ordered alphabet $\{a<b<c\}$.

\begin{example}
Over the ordered alphabet $\{a<b<c<d\}$, the Parikh graph of the word $bbccabdc$ is:
$$\begin{tikzpicture}
  \vertex (d) at (1,1){};
  \vertex (c1) at (0,1){};
  \vertex (c2) at (0,0){};
	\vertex (c3) at (0,-1){};
	\vertex (b1) at (1,0){};
	\vertex (b2) at (1,-1){};
	\vertex (b3) at (1,-2){};
	\vertex (a) at (0,-2){};
\node at (1.5,1) {\footnotesize 7};
\node at (-0.5,1) {\footnotesize 3};
\node at (-0.5,0) {\footnotesize 4};
\node at (-0.5,-1) {\footnotesize 8};
\node at (1.5,0) {\footnotesize 1};
\node at (1.5,-1) {\footnotesize 2};
\node at (1.5,-2) {\footnotesize 6};
\node at (-0.5,-2) {\footnotesize 5};
	\path
		(d) edge (c1)
		(d) edge (c2)
		(b1) edge (c1)
		(b1) edge (c2)
		(b1) edge (c3)
		(b2) edge (c1)
		(b2) edge (c2)
		(b2) edge (c3)
		(b3) edge (c3)
		(b3) edge (a)
	;
\end{tikzpicture}$$
\end{example}

\begin{remark}
Any Parikh graph of any word over any ordered alphabet is bipartite. This is because, by definition, if any two vertices of $\mathcal{G}(w)$
are adjacent, then
they correspond to letters $a_i$ in $w$ having subcripts with opposite parity. 
\end{remark}	

In \cite{bera2016structural} every Parikh graph of a binary word was indirectly shown to be a permutation graph by using the fact that a bipartite graph is a permutation graph if and only if its bipartite complement is a comparability graph. Here, we show this known result by directly producing the permutation.

\begin{proposition}\label{1307a}
	Suppose $\Sigma=\{a<b\}$ and
	$w=w_1w_2\dotsm w_n\in \Sigma^*$, where \linebreak $w_1,w_2, \dotsc, w_n\in \Sigma$. Let
	$\tau\colon \{1,2,\dotsc, n\}\rightarrow \{1,2,\dotsc, n\}$ be defined by
	$$\tau(x)=\begin{cases}
	i  &\text{ if }w_x=b \text{ and }   x= \pos_b(w,i) \\
	j+\vert w\vert_b  &\text{ if }w_x=a \text{ and } x=\pos_a(w,j). 
	\end{cases}
	$$
	Then for all integers $1\leq x<y\leq n$, the vertices $x$ and $y$ are adjacent in	
	$\mathcal{G}(w)$ if and only if $\tau(x)>\tau(y)$.
\end{proposition}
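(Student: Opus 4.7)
The plan is to first observe that in the binary setting $\Sigma=\{a<b\}$, Definition~\ref{2409} collapses to the following: for $x<y$, the vertices $x$ and $y$ are adjacent in $\mathcal{G}(w)$ if and only if $w_x=a$ and $w_y=b$ (since the only possible consecutive pair is $a_1a_2=ab$). So the claim reduces to showing, for $x<y$, that $w_x=a$ and $w_y=b$ holds if and only if $\tau(x)>\tau(y)$.

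Next I would verify that $\tau$ is actually a permutation of $\{1,2,\dotsc,n\}$. By construction, $b$-positions are sent bijectively onto $\{1,2,\dotsc,\vert w\vert_b\}$ via the indexing $\pos_b(w,\cdot)$, and $a$-positions are sent bijectively onto $\{\vert w\vert_b+1,\dotsc,n\}$ via the indexing $\pos_a(w,\cdot)$, so $\tau$ is a well-defined bijection. Two basic inequalities then follow immediately from the definition: (i) if $w_x=b$ then $\tau(x)\leq \vert w\vert_b$, and if $w_x=a$ then $\tau(x)\geq \vert w\vert_b+1$; and (ii) the maps $i\mapsto \pos_b(w,i)$ and $j\mapsto\pos_a(w,j)$ are strictly increasing, so on positions carrying the same letter, $\tau$ preserves the order inherited from $\{1,\dotsc,n\}$.

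With these observations in hand, the biconditional reduces to a four-case check on $(w_x,w_y)\in\{a,b\}^2$. If $w_x=w_y$, then by (ii), $x<y$ forces $\tau(x)<\tau(y)$. If $w_x=b$ and $w_y=a$, then (i) gives $\tau(x)\leq \vert w\vert_b<\tau(y)$. In both cases $\tau(x)>\tau(y)$ fails and so does the adjacency condition. The remaining case $w_x=a$, $w_y=b$ gives $\tau(x)\geq \vert w\vert_b+1>\tau(y)$ by (i), matching the adjacency condition. This exhausts all cases and establishes the equivalence.

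There is no real obstacle here; the statement is essentially a bookkeeping fact about the positions of $a$'s and $b$'s in $w$, and the slight subtlety is only to state clearly that $\tau$ is indeed a permutation and that it preserves order within each letter class. The heart of the argument is the single observation that $\tau$ dumps every $b$ into the lower block $\{1,\dotsc,\vert w\vert_b\}$ and every $a$ into the upper block $\{\vert w\vert_b+1,\dotsc,n\}$, so an inversion $\tau(x)>\tau(y)$ with $x<y$ occurs precisely when $x$ is an $a$-position and $y$ is a $b$-position.
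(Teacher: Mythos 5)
Your proposal is correct and follows essentially the same route as the paper's proof: a four-case check on $(w_x,w_y)$ using the two observations that $\tau$ sends $b$-positions into $\{1,\dotsc,\vert w\vert_b\}$ and $a$-positions into $\{\vert w\vert_b+1,\dotsc,n\}$, and that $\tau$ is order-preserving within each letter class. The only (harmless) addition is your explicit remark that $\tau$ is a bijection, which the paper leaves implicit.
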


\begin{proof}
Suppose $ x$ and $y$ are integers such that $ 1 \leq x < y \leq n $. Suppose $ w_x = a $ and $ w_y = b $. By Definition~\ref{2409}, $ x $ and $ y $ are adjacent in $\mathcal{G}(w)$. Note that $ \tau (x)>\vert w \vert_b $ and $ \tau (y) \leq \vert w\vert_b$. Hence, $ \tau (x) > \tau (y)$.		
The case $ w_x = b $ and $ w_y = a $ is similar. Now, suppose $ w_x =  w_y = a $.
	By Definition~\ref{2409}, $ x $ and $ y $ are  not adjacent in $\mathcal{G}(w)$. 
	Suppose $x=\pos_a(w,j)$ and $y=\pos_a(w,j')$. Since $x<y$, it follows that $j<j'$.  
	Hence, $ \tau (x)= j + \vert w \vert_b <j'+\vert w\vert_b=\tau(y)$.		
The case $w_x=b$ and $w_y=b$ is similar.
\end{proof}

\begin{definition}
A bipartite graph $G$ is \emph{Parikh word representable} if and only if $G$ is isomorphic to $\mathcal{G}(w)$
for some word $w$ over some ordered alphabet $\Sigma$.
We say that $G$ is \emph{Parikh $n$-ary word representable} if and only if $\vert \Sigma\vert=n$. 
\end{definition}

In particular, $G$ is Parikh ternary word representable if and only if $G$ is isomorphic to $\mathcal{G}(w)$ for some word $w$ over the ordered alphabet $\{a<b<c\}$. Also, it is clear that if
$G$ is Parikh $n$-ary word representable, then 
$G$ is Parikh $m$-ary word representable for every $m\geq n$.

Up to isomorphism, the actual vertices of a Parikh graph is inessential. Hence, for convenience of this work, we propose the following variation and equivalent version of Parikh graphs.

\begin{definition}
Suppose $\Sigma=\{a_1<a_2<\dotsb < a_s\}$ is an ordered alphabet and $w=w_1w_2\dotsm w_n$ is a nonempty word of length $n$ over $\Sigma$ with $w_i\in\Sigma$ for all $1\leq i\leq n$. 
Define the \emph{Parikh graph of $w$ over $\Sigma$}, denoted $\mathcal{G}(w)= (V,E)$, with the set of vertices 
$$V=\{ \, (a_i, l) \mid 1\leq i\leq s, \vert w\vert_{a_i}\geq 1,
\text{ and }  1\leq l\leq \vert w\vert_{a_i}  \,\}$$
 and for all $(a_i,l), (a_{i'},l')\in V$, there is an (undirected) edge between them if and only if $$    \vert i-i'\vert=1 \text{ and } (i-i')(\pos_{a_i}(w,l)-\pos_{a_{i'}}(w,l'))>0.$$
\end{definition}

In other words, $(a_i,l)$ and $(a_{i+1},l')$ are adjacent if and only if $\pos_{a_i}(w,l)<\pos_{a_{i+1}}(w,l')$ and no other pair of vertices are adjacent. Note also that there is a one-to-one correspondence between the letters in $w$ and the vertices of $\mathcal{G}(w)$.

\begin{example}\label{2208a}
Over the ordered alphabet $\{a<b<c<d\}$, the Parikh graph of the word $bbccabdc$ is:
$$\begin{tikzpicture}
  \vertex (d) at (1,1){};
  \vertex (c1) at (0,1){};
  \vertex (c2) at (0,0){};
	\vertex (c3) at (0,-1){};
	\vertex (b1) at (1,0){};
	\vertex (b2) at (1,-1){};
	\vertex (b3) at (1,-2){};
	\vertex (a) at (0,-2){};
\node at (1.5,1) {\footnotesize (d,1)};
\node at (-0.5,1) {\footnotesize (c,1)};
\node at (-0.5,0) {\footnotesize (c,2)};
\node at (-0.5,-1) {\footnotesize (c,3)};
\node at (1.5,0) {\footnotesize (b,1)};
\node at (1.5,-1) {\footnotesize (b,2)};
\node at (1.5,-2) {\footnotesize (b,3)};
\node at (-0.5,-2) {\footnotesize (a,1)};
	\path
		(d) edge (c1)
		(d) edge (c2)
		(b1) edge (c1)
		(b1) edge (c2)
		(b1) edge (c3)
		(b2) edge (c1)
		(b2) edge (c2)
		(b2) edge (c3)
		(b3) edge (c3)
		(b3) edge (a)
	;
\end{tikzpicture}$$
\end{example}

In our version, the arrangement of the vertices seems to be more systematic. This is no coincidence and we will see in Lemma~\ref{0409b} that this ordering is essential to show that every Parikh graph is a bipartite permutation graph. Example~\ref{2208a} was chosen because it was given as an example of a bipartite permutation graph which is not Parikh binary word representable. 

\begin{remark}\cite[Lemma 4]{bera2016structural}\label{1807a} If $H$ is the subgraph of $\mathcal{G}(w)=(V,E)$ induced by a set of vertices $V'\subseteq V$, then $H$ is isomorphic to $\mathcal{G}(u)$ where $u$ is the subword of $w$ 
formed from the letters corresponding to the vertices in $V'$. In other words, every induced subgraph of a Parikh word representable graph is Parikh word representable.	
\end{remark}

Every connected component of a bipartite graph is obviously bipartite. Due to the following theorem, connectivity can be assumed in the study of Parikh word representability of bipartite graphs. 

\begin{theorem}\label{2906a}
	A bipartite graph is Parikh word representable if and only if every of its connected component is Parikh word representable.
\end{theorem}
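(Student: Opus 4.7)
The strategy is to handle the two directions separately. The forward direction is an immediate consequence of Remark~\ref{1807a}: every connected component of $G$ is an induced subgraph of $G$, so each component is Parikh word representable whenever $G$ itself is.

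For the backward direction, I let $G_1, G_2, \dotsc, G_k$ denote the connected components of $G$ and assume each $G_i \cong \mathcal{G}(w_i)$ for some word $w_i$ over an ordered alphabet $\Sigma_i$. After relabeling letters, I may arrange that the $\Sigma_i$'s are pairwise disjoint. The plan is then to form the concatenation $w = w_1 w_2 \dotsm w_k$ and to equip the union of the $\Sigma_i$'s with a global ordered alphabet structure $\Sigma$ so that $\mathcal{G}(w)$, computed over $\Sigma$, is isomorphic to $G$.

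The key design choice is to insert a fresh ``buffer'' letter $\ast_i$, not appearing in any $w_j$, between each pair of consecutive $\Sigma_i$'s in the ordering on $\Sigma$:
$$\Sigma_1 \;<\; \{\ast_1\} \;<\; \Sigma_2 \;<\; \{\ast_2\} \;<\; \dotsb \;<\; \{\ast_{k-1}\} \;<\; \Sigma_k,$$
with the original ordering preserved inside each $\Sigma_i$. Because the $\ast_i$'s never occur in $w$, they contribute no vertices to $\mathcal{G}(w)$, yet their mere presence in $\Sigma$ ensures that the maximum letter of $\Sigma_i$ and the minimum letter of $\Sigma_{i+1}$ are no longer consecutive in $\Sigma$. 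The verification then splits into two parts: first, for any two vertices lying in the same block $w_i$, the adjacency condition in $\mathcal{G}(w)$ reduces to that of $\mathcal{G}(w_i)$ because $\Sigma_i$ sits as an ``interval'' inside $\Sigma$; second, no cross-block edge can arise, by the buffer property just noted. This yields that $\mathcal{G}(w)$ is the disjoint union of the $\mathcal{G}(w_i)$'s, and hence isomorphic to $G$.

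The main obstacle, and the reason the buffer letters are indispensable, is exactly the avoidance of spurious cross-block adjacencies: without them, the top of $\Sigma_i$ and the bottom of $\Sigma_{i+1}$ would be consecutive in $\Sigma$, creating unwanted edges between $G_i$ and $G_{i+1}$ in $\mathcal{G}(w)$ whenever both letters appear in their respective blocks. Once the buffered alphabet is in place, what remains is a routine check of the vertex-edge correspondence.
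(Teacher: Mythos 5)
Your proof is correct, and its overall strategy coincides with the paper's: decompose into components, represent each by a word over a private alphabet, concatenate the words, and merge the alphabets with an ordering chosen so that no cross-component edges appear. The one genuine difference is the device used for that last point. You keep the blocks in the same order in the alphabet as in the word, $\Sigma_1<\Sigma_2<\dotsb<\Sigma_k$, and insert unused buffer letters $\ast_i$ so that the top of $\Sigma_i$ and the bottom of $\Sigma_{i+1}$ are not consecutive; the paper instead orders the blocks in \emph{reverse}, $\Pi_l<\Pi_{l-1}<\dotsb<\Pi_1$ while the word is $v_1v_2\dotsm v_l$. With the reversed order, the only letters from distinct blocks that are consecutive in the merged alphabet are the maximum of $\Pi_{j+1}$ and the minimum of $\Pi_j$, and since every occurrence of the former lies in $v_{j+1}$, i.e.\ strictly to the right of every occurrence of the latter in $v_j$, the subword needed to create an edge never occurs. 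So your remark that the buffer letters are ``indispensable'' is true only relative to your choice of concordant block ordering; they can be dispensed with entirely by reversing the blocks, which keeps the alphabet no larger than $\bigcup_i\Sigma_i$. What your version buys in exchange is a slightly more transparent verification: non-adjacency across blocks is immediate from non-consecutiveness of letters, with no need to reason about positions in the word. Both arguments are complete modulo the same routine within-block check, namely that each $\Sigma_i$ sits as an interval in the merged alphabet so that adjacency inside a block is unchanged.
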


\begin{proof}
The forward direction follows by Remark~\ref{1807a} because every connected component is an induced subgraph. 

Conversely, suppose $C_i$ for $1\leq i\leq l$ are the connected components of a bipartite graph $G$ and each $C_i$ is Parikh word representable, say $C_i$ is isomorphic to $\mathcal{G}(v_i)$ for some word $v_i$ over some ordered aphabet $\Pi_i$. Without loss of generality, we may assume that 
$\Pi_i\cap \Pi_j=\emptyset$ whenever $1\leq i<j\leq l$.
Let $w=v_1v_2\dotsm v_l$
and let $\Sigma$ denote the ordered alphabet with $\bigcup_{i=1}^l \Pi_i$ as underlying alphabet 
such that its ordering is defined as follows:
for every $x,y\in \bigcup_{i=1}^l \Pi_i$,
$x<y$ if and only if either
\begin{enumerate}
\item $x\in \Pi_j$ and $y\in \Pi_{j'}$ for some $1\leq j'<j\leq l$; or
\item $x,y\in \Pi_j$ for some $1\leq j\leq l$ and 
$x<y$ in the ordering of $\Pi_j$.
\end{enumerate}
Then it can be verified that $G$ is isomorphic to $\mathcal{G}(w)$.
\end{proof}

\begin{remark}\label{2110a}
If a connected bipartite graph $G$ 
is Parikh word representable, then it is Parikh representable by a word $w$ (meaning $G$ is isomorphic to $\mathcal{G}(w)$) over some ordered alphabet $\Sigma$ such that $\supp(w)=\Sigma$. 
\end{remark}

We end this section by presenting a nice connection between Parikh graphs and partitions. A word $w$ over an alphabet $\Sigma$ is said to be \emph{slender} if and only if $\vert w\vert_a=1$ for all $a\in \Sigma$. Note that the Parikh graph of any slender word is simply a disjoint union of paths and isolated vertices.

From now on, let $\Sigma_s$ denote the fixed ordered alphabet $\{a_1<a_2<\dotsb<a_s\}$ for every integer $s\geq 2$.

\begin{theorem}
For every integer $s\geq 2$, the number of distinct bipartite graphs (up to isomorphism) Parikh word representable by slender words over $\Sigma_s$ is the number of distinct partitions of $s$.	
\end{theorem}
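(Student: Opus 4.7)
The plan is to establish a bijection between the isomorphism classes of Parikh graphs of slender words over $\Sigma_s$ and the partitions of $s$. Since a slender word $w$ over $\Sigma_s$ satisfies $\vert w\vert_{a_i}=1$ for every $i$, it is a permutation of $a_1a_2\dotsm a_s$, so $\mathcal{G}(w)$ has exactly $s$ vertices $v_1,\dotsc,v_s$, one for each letter. By the definition of Parikh graph, the only edges that can occur are between $v_i$ and $v_{i+1}$ for some $1\leq i\leq s-1$, and such an edge is present precisely when $a_i$ precedes $a_{i+1}$ in $w$. Hence $\mathcal{G}(w)$ is a spanning subgraph of the path on vertices $v_1,v_2,\dotsc,v_s$; that is, a linear forest on $s$ vertices. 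The isomorphism type of such a forest is determined by the multiset of sizes of its connected components, which is precisely a partition of $s$.

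It then suffices to show that every partition of $s$ is realized by some slender word. Given $\lambda=(k_1,k_2,\dotsc,k_m)$ with $k_1+k_2+\dotsb+k_m=s$, set $s_0=0$ and $s_j=k_1+\dotsb+k_j$ for $1\leq j\leq m$, and put $B_j=a_{s_{j-1}+1}a_{s_{j-1}+2}\dotsm a_{s_j}$. Define the slender word $w=B_mB_{m-1}\dotsm B_1$. Within each block $B_j$ the letters appear in increasing order, so the corresponding vertices form a path on $k_j$ vertices in $\mathcal{G}(w)$. Between consecutive blocks $B_{j+1}$ and $B_j$ inside $w$, the letter $a_{s_j+1}$ from $B_{j+1}$ appears before $a_{s_j}$ from $B_j$, so the potential edge $v_{s_j}v_{s_j+1}$ is absent. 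Thus $\mathcal{G}(w)$ is a disjoint union of paths with $k_1,k_2,\dotsc,k_m$ vertices, realizing $\lambda$.

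Both directions of the correspondence are essentially forced by the definition of Parikh graph, so there is no substantial obstacle; the only real work is the explicit construction above used to verify surjectivity, and the easy remark that two linear forests with the same total number of vertices are isomorphic if and only if their component-size multisets coincide.
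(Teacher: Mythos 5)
Your proposal is correct and follows essentially the same route as the paper: both identify $\mathcal{G}(w)$ for slender $w$ as a linear forest whose component sizes give a partition of $s$, and both note that isomorphism classes correspond exactly to partitions. Your explicit block construction $w=B_mB_{m-1}\dotsm B_1$ just makes concrete the realizability step that the paper asserts without detail.
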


\begin{proof}
Suppose $s\geq 2$ and $w$ is a slender word over $\Sigma_s$.
Each isolated vertex in $\mathcal{G}(w)$ can be viewed as a degenerate path. Then $\mathcal{G}(w)$
is simply a disjoint union of paths and can be associated to a partition of $s$ whose respective summand corresponds to the number of vertices in the respective path.

Note that the Parikh graphs of any two distinct slender words over $\Sigma_s$ are isomorphic if and only if they are  associated to the same partition of $s$.
Conversely, any partition of $s$ can be associated to $\mathcal{G}(w)$ for some slender word $w$ over $\Sigma_s$. Therefore, the number of distinct bipartite graphs Parikh word representable by slender words over $ \Sigma_s $ is the number of partition of $ s $.
\end{proof}

\begin{example}
Up to isomorphism,
$\mathcal{G}(abcd)$, $\mathcal{G}(bcda)$,
$\mathcal{G}(cdab)$, $\mathcal{G}(cdba)$, and\linebreak $\mathcal{G}(dcba)$  are all the distinct bipartite graphs Parikh word representable by slender words over $\{a<b<c<d\}$, corresponding respectively to the partitions $4=3+1=2+2=2+1+1=1+1+1+1$ of the integer four.	
\end{example}

\section{As Bipartite Permutation graphs}

In \cite{bera2016structural} it was already shown that every graph Parikh representable by a \emph{binary} word is a bipartite permutation graph. Using the following characterization of bipartite permutation graphs, we will first show that this in fact is true for every Parikh word representable graph. The remainder of this section after that is devoted to prove the converse. Together, we obtain our main result that says that the two classes of bipartite graphs coincide.

\begin{definition}
	Suppose $G=(X,Y,E)$ is a bipartite graph. A \emph{strong ordering} on the vertices of $G$ is an ordered pair $(<_X, <_Y)$ where $<_X$ is an
	ordering on $X$ and 
	$<_Y$ is an ordering on $Y$	
	such that for all $(x,y), (x',y')\in E$, if $x<_X x'$ and $y'<_Y y$, then $(x,y')$ and $(x',y)$ are both in $E$.	
\end{definition}

\begin{theorem}\cite{spinrad1987bipartite}\label{1207a}
	Suppose $G=(X,Y,E)$ is a bipartite graph. Then $G$ is a permutation graph if and only if there exists a strong ordering on the vertices of $G$.	
\end{theorem}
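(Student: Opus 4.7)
The plan is to prove the two directions separately, each by direct construction.

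For the forward direction, suppose $G = (X,Y,E)$ is a bipartite permutation graph with witnessing ordering $v_1, v_2, \dotsc, v_n$ and permutation $\tau$. I would define $<_X$ and $<_Y$ as the orderings induced by the index order, namely $v_i <_X v_j$ when $v_i, v_j \in X$ and $i < j$, and analogously for $<_Y$. Because $X$ and $Y$ are independent sets, no two indices inside $X$ (or inside $Y$) can be reversed by $\tau$, so $\tau$ preserves order on the indices of $X$ and on the indices of $Y$. To verify the strong ordering property, I would take any two edges $(x,y), (x',y') \in E$ with $x <_X x'$ and $y' <_Y y$, writing their indices as $i < i'$ and $j' < j$. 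There are six possible total orderings of $\{i, i', j, j'\}$ consistent with these two inequalities. Using only that an edge corresponds to an index-$\tau$ reversal and that $\tau$ is order-preserving on $X$-indices and $Y$-indices, four of these six cases yield an immediate contradiction, while in the remaining two cases ($i < i' < j' < j$ and $j' < j < i < i'$) short chains of inequalities show that $\tau$ reverses both $(v_i, v_{j'})$ and $(v_{i'}, v_j)$, whence $(x, y'), (x', y) \in E$ as required.

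For the backward direction, suppose $(<_X, <_Y)$ is a strong ordering on $G$. The plan is to construct two linear orderings $\sigma_1, \sigma_2$ of $V = X \cup Y$, each extending $<_X$ and $<_Y$, such that two vertices are adjacent in $G$ iff they occur in opposite orders in $\sigma_1$ and $\sigma_2$; such a pair immediately yields a permutation representation of $G$. The key intermediate fact to extract from strong ordering is a convexity property: if $x_1 <_X x_2 <_X x_3$, $x_1, x_3 \in N(y)$, and $x_2$ is non-isolated, then applying strong ordering to an edge incident to $x_2$ (taking cases on whether its other endpoint lies above or below $y$ in $<_Y$) forces $x_2 \in N(y)$. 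Since isolated vertices can be freely repositioned in $<_X$ and in $<_Y$ without affecting the strong ordering, we may assume $N(y)$ is $<_X$-convex for every $y \in Y$, and symmetrically $N(x)$ is $<_Y$-convex for every $x \in X$. With this in hand I would argue by induction on $|V|$: remove the $<_X$-maximum vertex $x_p$, invoke the inductive hypothesis on $G - x_p$ to obtain $\sigma_1', \sigma_2'$, and reinsert $x_p$ at positions in both orderings chosen so that the set of $Y$-vertices falling on opposite sides of $x_p$ between $\sigma_1$ and $\sigma_2$ is precisely the convex set $N(x_p)$.

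The main obstacle is in the backward direction: verifying that the reinsertion of $x_p$ can be done simultaneously in $\sigma_1$ and $\sigma_2$ so that $x_p$ becomes adjacent in the corresponding permutation graph to exactly the vertices in $N(x_p)$ while preserving the previously established structure. This relies essentially on the $<_Y$-convexity of $N(x_p)$, which allows the two positions of $x_p$ (one in $\sigma_1$, one in $\sigma_2$) to bracket exactly the interval $N(x_p)$ within the $Y$-sequence; bookkeeping around isolated vertices, which must be consistently placed in both orderings, is the most delicate part.
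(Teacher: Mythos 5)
First, a point of reference: the paper does not prove this theorem at all --- it is quoted from \cite{spinrad1987bipartite} and used as a black box, so there is no in-paper proof to compare your argument against. Your forward direction is correct and essentially complete: taking $<_X$ and $<_Y$ to be the index orders, independence of $X$ and of $Y$ forces $\tau$ to be increasing on the $X$-indices and on the $Y$-indices, and the six-case analysis works as you describe (four interleavings are inconsistent with both given edges being inversions, and in the two surviving ones, $i<i'<j'<j$ and $j'<j<i<i'$, chaining the inequalities shows both required pairs are inverted).

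The backward direction, however, has a genuine gap at the reinsertion step, for two reasons. First, convexity of neighborhoods is not the only consequence of the strong ordering you need: you also need monotonicity of the interval endpoints, namely that $x<_X x'$ implies $\min N(x)\leq_Y \min N(x')$ and $\max N(x)\leq_Y \max N(x')$ (each follows from one application of the strong ordering to the edges realizing the relevant minima or maxima). Without this, nothing in your argument rules out $N(x_p)$ lying entirely $<_Y$-below the neighborhood of some $x'<_X x_p$, in which case no pair of insertion positions for $x_p$ can simultaneously avoid crossing $x'$ and cross exactly $N(x_p)$; your sketch never extracts or invokes this fact. Second, the inductive hypothesis is too weak: to insert $x_p$ into $\sigma_1'$ so that exactly an initial segment $I_1$ of $Y$ precedes it while every other $X$-vertex stays on a fixed side, the set $(X\setminus\{x_p\})\cup I_1$ (or some compatible cut) must occur as a prefix of $\sigma_1'$, and an arbitrary pair $(\sigma_1',\sigma_2')$ realizing $G-x_p$ and extending $(<_X,<_Y)$ need not have this property. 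The clean fix is to drop the induction and construct the two orderings directly: keep $Y$ in the order $<_Y$ on both lines, place each non-isolated $x$ immediately before $\min N(x)$ in $\sigma_1$ and immediately after $\max N(x)$ in $\sigma_2$, breaking ties among $X$-vertices by $<_X$ and putting isolated vertices at the end of both. Endpoint monotonicity guarantees the $X$-vertices appear in the order $<_X$ on both lines and hence never cross one another, and $x$ crosses $y$ if and only if $\min N(x)\leq_Y y\leq_Y \max N(x)$, which by your convexity lemma is exactly $y\in N(x)$.
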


\begin{lemma}\label{0409b}
Suppose $s\geq 2$ and $w\in \Sigma_s^*$ with $\supp(w)=\Sigma_s$. Let $\mathcal{G}(w)=(X,Y,E)$ where $X$ (respectively $Y$) consists of vertices of the form $(a_i,l)$ where $i$ is odd (respectively even). Let $<_X$ denote the following ordering on $X$:	
\begin{multline*}
(a_{2\lceil s/2\rceil-1}, 1),(a_{2\lceil s/2\rceil-1}, 2), \dotsc, (a_{2\lceil s/2\rceil-1}, \vert w\vert_{ a_{2\lceil s/2\rceil-1}   }),\dotsc,\\
(a_3, 1), (a_3,2)\dotsc, (a_3, \vert w\vert_{a_3}),(a_1, 1),(a_1,2), \dotsc, (a_1, \vert w\vert_{a_1});
\end{multline*}
and let $<_Y$ denote the following ordering on $Y$:	
	\begin{multline*}
(a_{2\lfloor s/2\rfloor}, 1),(a_{2\lfloor s/2\rfloor}, 2), \dotsc, (a_{2\lfloor s/2\rfloor}, \vert w\vert_{ a_{2\lfloor s/2\rfloor}   }),\dotsc,\\
(a_4, 1), (a_4,2)\dotsc, (a_4, \vert w\vert_{a_4}),(a_2, 1),(a_2,2), \dotsc, (a_2, \vert w\vert_{a_2}),
\end{multline*}
where $\lceil \phantom{2}\rceil$ and $\lfloor \phantom{2} \rfloor$ are the ceiling and floor functions respectively.
Then $(<_X,<_Y)$ is a strong ordering on the vertices of $\mathcal{G}(w)$.	
\end{lemma}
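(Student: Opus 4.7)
The plan is to verify the strong-ordering property directly from the definition of $\mathcal{G}(w)$. Fix edges $(x,y),(x',y')\in E$ with $x<_X x'$ and $y'<_Y y$, and write $x=(a_i,l)$, $x'=(a_{i'},l')$, $y=(a_j,m)$, $y'=(a_{j'},m')$ with $i,i'$ odd and $j,j'$ even. Because edges of $\mathcal{G}(w)$ only connect vertices whose letter-indices differ by exactly one, $|i-j|=|i'-j'|=1$. The task is then to derive both $|i-j'|=|i'-j|=1$ and the corresponding position inequalities required for $(x,y'),(x',y)\in E$.

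Unpacking the orderings, $x<_X x'$ means either $i>i'$, or $i=i'$ and $l<l'$; similarly $y'<_Y y$ means either $j'>j$, or $j'=j$ and $m'<m$. This produces four subcases, two of which are constrained sharply by parity. If $i>i'$ and $j'>j$, then $i\geq i'+2$ and $j'\geq j+2$, which combined with $j\geq i-1$ and $j'\leq i'+1$ yields the contradiction $j'\leq i'+1\leq i-1\leq j$. If instead $i>i'$ and $j=j'$, checking the four possibilities for $(j-i,j'-i')\in\{\pm 1\}^2$ forces the rigid configuration $i=i'+2$ and $j=j'=i-1=i'+1$, in which case the index-difference condition for both candidate edges $(x,y'),(x',y)$ holds automatically.

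In each surviving case the remaining work is to chain strict position inequalities. The key elementary input is that $l<l'$ implies $\pos_{a_k}(w,l)<\pos_{a_k}(w,l')$ for any letter $a_k$, so strict orderings on the second coordinates transfer to strict orderings of positions within the same column; combining these with the position inequalities that come from $(x,y),(x',y')\in E$ delivers the required inequalities for $(x,y')$ and $(x',y)$ simultaneously. For instance, in the case $i=i'$ and $j=j'=i-1$ with $l<l'$ and $m'<m$, the single chain $\pos_{a_{i-1}}(w,m')<\pos_{a_{i-1}}(w,m)<\pos_{a_i}(w,l)<\pos_{a_i}(w,l')$ verifies both new edges at once; the other surviving subcases are handled by analogous two- or three-step chains.

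The main obstacle is bookkeeping rather than depth: one must keep careful track of which indices are odd versus even, remember that $<_X$ and $<_Y$ list indices in decreasing order (so $x<_X x'$ corresponds to $i\geq i'$), and not overlook the off-diagonal rigid case $i=i'+2$ that survives the parity filtering. Once the surviving cases are correctly enumerated, every verification collapses to a short position chase of the form illustrated above.
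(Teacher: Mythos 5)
Your proof is correct and takes essentially the same approach as the paper's: a case analysis on whether the letter indices of $x,x'$ (and of $y,y'$) coincide, with the adjacency constraint $|i-j|=|i'-j'|=1$ eliminating one cross case outright and rigidifying the other to $i=i'+2$, followed by chaining strict position inequalities to certify both new edges at once. The paper organizes the cases slightly differently (first $i=i'$ with three subcases on $y,y'$, then $i>i'$), but the substance, including the key four-term chain in the rigid case, is identical.
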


\begin{proof}
	Suppose $(x,y), (x',y')\in E$ and $x<_X x'$ and $y'<_Y y$. We need to show that $(x,y')$ and $(x',y)$ are both in $E$.
	
	\setcounter{case}{0}
	
	\begin{case} $x=(a_i,l)$ and $x'=(a_i,l')$ for some (odd) $i$ and $l<l'$.
	
	The following subcases exhaust all possibilities because $(x,y), (x',y')\in E$.	
		\begin{subcase} $y=(a_{i+1},m)$ and $y'=(a_{i+1},m')$ for some $m>m'$.
			
		Since $(x',y')\in E$, by  definition, $\pos_{a_{i+1}}(w, m')>\pos_{a_i}(w,l')$. Hence,
			$$\pos_{a_{i}}(w, l)<\pos_{a_i}(w,l')<\pos_{a_{i+1}}(w, m')<\pos_{a_{i+1}}(w,m).$$
			It follows that $(x,y')\in E$ and $(x',y)\in E$ by the definition of Parikh graph.
		\end{subcase}
		
		\begin{subcase}$y=(a_{i-1},m)$ and $y'=(a_{i-1},m')$ for some $m>m'$.
			
			This is similar to Case~1.1.
		\end{subcase}
		
		\begin{subcase} $y=(a_{i-1},m)$ and $y'=(a_{i+1},m')$ for some $m$ and $m'$.
			
	Since $(x,y),(x',y')\in E$, by the definition of Parikh graph, $\pos_{a_{i+1}}(w, m')>\pos_{a_i}(w,l')$
			and $\pos_{a_{i-1}}(w, m)<\pos_{a_i}(w,l)$. Hence,
			$$\pos_{a_{i-1}}(w, m)<\pos_{a_i}(w,l)<\pos_{a_i}(w, l')<\pos_{a_{i+1}}(w,m').$$
			It follows that $(x,y')\in E$ and $(x',y)\in E$.
		\end{subcase}
	\end{case}
	
	\begin{case} $x=(a_i,l)$ and $x'=(a_{i'},l')$ for some $l$, $l'$, and (odd) $i>i'$.
		
		Since $(x,y)\in E$ and $(x',y')\in E$, it follows that
		$i-i'$ cannot be more than two for otherwise $y<_Y y'$, a contradiction. Hence, in fact, $x'=(a_{i-2},l')$,
		$y=(a_{i-1},m)$ and $y'=(a_{i-1},m')$ for some $m>m'$.
	Similarly, $\pos_{a_{i-1}}(w, m')>\pos_{a_{i-2}}(w,l')$
		and $\pos_{a_{i-1}}(w, m)<\pos_{a_{i}}(w,l)$. Hence,
		$$\pos_{a_{i-2}}(w, l')<\pos_{a_{i-1}}(w,m')<\pos_{a_{i-1}}(w, m)<\pos_{a_{i}}(w,l).$$
		It follows that $(x,y')\in E$ and $(x',y)\in E$.\qedhere
	\end{case}
\end{proof}

\begin{theorem}\label{1207b}
	Every Parikh word representable graph is a bipartite permutation graph.	
\end{theorem}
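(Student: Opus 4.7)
The plan is to assemble the pieces already developed in the excerpt. Given any Parikh word representable graph $G$, I would first note it is bipartite (by the earlier remark on Parikh graphs) and then reduce to the connected case: by Theorem~\ref{2906a}, $G$ is Parikh word representable iff each of its connected components is, and by Remark~\ref{0409a}, $G$ is a permutation graph iff each of its connected components is. So without loss of generality, one may assume $G$ is connected.

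For the connected case, Remark~\ref{2110a} lets me pick a word $w$ over some $\Sigma_s$ with $\supp(w)=\Sigma_s$ such that $G\cong\mathcal{G}(w)$. Now the heavy lifting is already done by Lemma~\ref{0409b}, which exhibits an explicit strong ordering $(<_X,<_Y)$ on the vertices of $\mathcal{G}(w)$ (placing the odd-indexed letter classes in decreasing order of index on one side, the even-indexed classes similarly on the other, and ordering within each class by position in $w$). By Spinrad's characterization (Theorem~\ref{1207a}), the existence of a strong ordering on the vertices of the bipartite graph $\mathcal{G}(w)$ yields that $\mathcal{G}(w)$ is a permutation graph, hence $G$ is a bipartite permutation graph.

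Since both reductions and the structural input are already packaged, there is no genuine obstacle here; the only step requiring any care is verifying that one may indeed invoke Remark~\ref{2110a} to assume full support, which is straightforward because an ordered alphabet with letters absent from $w$ contributes only isolated vertices to $\mathcal{G}(w)$ that can be trivially discarded. The essential content of the theorem was absorbed into the construction of $(<_X,<_Y)$ in Lemma~\ref{0409b}, so the proof itself should fit in just a few lines of assembly.
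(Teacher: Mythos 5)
Your proposal is correct and follows essentially the same route as the paper: reduce to a connected component, invoke Remark~\ref{2110a} to assume full support, and then apply Lemma~\ref{0409b} together with Spinrad's characterization (Theorem~\ref{1207a}). The only cosmetic difference is that the paper cites Remark~\ref{1807a} rather than Theorem~\ref{2906a} for the component reduction, which amounts to the same thing.
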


\begin{proof}
	Suppose $G$ is Parikh word representable. By Remarks~\ref{0409a} and \ref{1807a}, we may assume $G$ is connected.  Without loss of generality and by Remark~\ref{2110a}, we may assume that $G=\mathcal{G}(w)$ for some word $w\in \Sigma_s$ with $\supp(w)=\Sigma_s$.
By Lemma~\ref{0409b}, there exists a strong ordering on the vertices of $G$. Therefore, $G$ is a bipartite permutation graph by Theorem~\ref{1207a}. 
\end{proof}

We need a technical lemma before our main theorem.

\begin{lemma}\label{2508a}
	Suppose $G=(X,Y,E)$ is a connected bipartite permutation graph. Let $(<_X, <_Y)$ be a strong ordering on the vertices of $G$. Then 
	for some positive integer $n$,
	there exist a sequence $X_1, X_2, \dotsc, X_n$ of nonempty intervals of $X$ (with respect to $<_X$) with $\bigcup_{i=1}^n X_i=X$
	 and a sequence $Y_1, Y_2, \dotsc, Y_n$ of nonempty intervals of $Y$ (with respect to $<_Y$) with $\bigcup_{i=1}^n Y_i=Y$
	such that 
	\begin{enumerate}
		\item $\bigcup_{i=1}^p X_i$ is an end segment of $X$ and $X_p$ is an is an initial segment of 
	$\bigcup_{i=1}^p X_i$ for each $1\leq p\leq n$;
	\item $\bigcup_{i=1}^p Y_i$ is an end segment of $Y$ and $Y_p$ is an is an initial segment of 
		$\bigcup_{i=1}^p Y_i$ for each $1\leq p\leq n$;	
	\item $X_p \nsubseteq X_{p+1}$ and
$Y_p \nsubseteq Y_{p+1}$ for each $1\leq p\leq  n-1$;	
	\item at least one of 
	$X_{p+1}\backslash X_p$
	or $Y_{p+1}\backslash Y_p$ is nonempty 
for each $1\leq p\leq n-1$;	
\item $E(G_{p})=E(G_{p-1})\cup  (X_p\times Y_p)$ for each $1\leq p\leq n$,
\end{enumerate}
where $G_p= G[\bigcup_{i=1}^p X_i \cup \bigcup_{i=1}^p Y_i ]$ for each $1\leq p \leq n$ and $E(G_0)=\emptyset$.
\end{lemma}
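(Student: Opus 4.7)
\emph{Plan.} I would construct the sequences inductively via a top-down greedy procedure: at each step, carve off a maximal biclique anchored at the current upper-right corner of the as-yet-unprocessed edge region. Throughout, write $X = \{x_1 <_X \dotsb <_X x_m\}$ and $Y = \{y_1 <_Y \dotsb <_Y y_k\}$.

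\emph{Preliminary observations.} From the strong ordering I would first derive that every neighborhood is an interval with monotone endpoints: $N(x_i) = [y_{\alpha(i)}, y_{\beta(i)}]$ with both $\alpha, \beta$ non-decreasing in $i$, and symmetrically $N(y_j) = [x_{\gamma(j)}, x_{\delta(j)}]$ with $\gamma, \delta$ non-decreasing in $j$. Each monotonicity follows by contradiction against the strong ordering applied to a suitable pair of edges. Connectivity of $G$ together with $|V(G)| \geq 2$ also forces $(x_m, y_k) \in E$: pick $x' \in N(y_k)$ and $y' \in N(x_m)$; either $x' = x_m$ or $y' = y_k$ directly gives the edge, or else $x' <_X x_m$ and $y' <_Y y_k$, whence the strong ordering applied to $(x', y_k), (x_m, y') \in E$ yields $(x_m, y_k) \in E$.

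\emph{Base case.} Take $X_1 := N(y_k)$ and $Y_1 := N(x_m)$. Both are end segments by the monotonicity (and nonempty, since $(x_m, y_k) \in E$). For $(x, y) \in X_1 \times Y_1$ with $x \neq x_m$ and $y \neq y_k$, the strong ordering applied to $(x, y_k), (x_m, y) \in E$ (since $x <_X x_m$ and $y <_Y y_k$) yields $(x, y) \in E$; hence $X_1 \times Y_1 \subseteq E$, and bipartiteness of $G_1$ gives $E(G_1) = X_1 \times Y_1$. Conditions (1)--(5) hold trivially at $p = 1$.

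\emph{Inductive step and main obstacle.} Given $(X_p, Y_p)$, write $X_p = [x_{a_p}, x_{b_p}]$ and $Y_p = [y_{c_p}, y_{d_p}]$. If $a_p = c_p = 1$ stop with $n := p$; otherwise select a ``next corner'' $(b_{p+1}, d_{p+1})$ satisfying $b_{p+1} < b_p$, $d_{p+1} < d_p$, $(x_{b_{p+1}}, y_{d_{p+1}}) \in E$, together with the two coverage bounds $b_{p+1} \geq \max(a_p - 1, \delta(c_p - 1))$ and $d_{p+1} \geq \max(c_p - 1, \beta(a_p - 1))$ (ignoring any term in which $a_p = 1$ or $c_p = 1$). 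Using monotonicity, one checks that $\delta(c_p - 1) < b_p$ and $\beta(a_p - 1) < d_p$ (since $(x_{b_p}, y_{c_p-1}) \notin E$, etc.), so the feasible intervals for $b_{p+1}$ and $d_{p+1}$ are nonempty; a case analysis then produces a pair with $(x_{b_{p+1}}, y_{d_{p+1}}) \in E$. Set $X_{p+1} := [x_{\gamma(d_{p+1})}, x_{b_{p+1}}]$ and $Y_{p+1} := [y_{\alpha(b_{p+1})}, y_{d_{p+1}}]$, which are intervals by monotonicity, and $X_{p+1} \times Y_{p+1} \subseteq E$ via the strong-ordering argument used in the base case. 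The principal obstacle is verifying Condition~(5): the lower bounds on $b_{p+1}, d_{p+1}$ are exactly what is needed so that every new $X$-vertex $x' \in X_{p+1} \setminus U_p$ lies in $[x_{\gamma(d_{p+1})}, x_{a_p - 1}]$ and satisfies $\beta(x') \leq \beta(x_{a_p - 1}) \leq d_{p+1}$, forcing $N(x') \cap V_{p+1} \subseteq Y_{p+1}$; the symmetric argument handles new $Y$-vertices. Condition~(4) holds because the coverage bounds force extension of at least one of $U_p$, $V_p$, and termination is immediate since $b_p$ and $d_p$ form strictly decreasing positive integer sequences.
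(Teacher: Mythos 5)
Your construction is essentially the paper's own: your corner $(x_{b_{p+1}}, y_{d_{p+1}})$ coincides with the paper's choice of the last vertices of $X$ and $Y$ incident on a not-yet-covered edge (namely $\max(a_p-1,\delta(c_p-1))$ and $\max(c_p-1,\beta(a_p-1))$), and your $X_{p+1}$, $Y_{p+1}$ are exactly the paper's sets of vertices adjacent to the opposite corner vertex and preceding or equal to the corner, merely rephrased via the interval/monotone-endpoint structure of neighborhoods. One caution: for condition (4) you must take $b_{p+1}$ and $d_{p+1}$ \emph{equal} to your stated lower bounds rather than merely at least them (otherwise a step can add no new vertices at all); with that minimal choice, connectivity forces at least one of $\delta(c_p-1)\geq a_p$ or $\beta(a_p-1)\geq c_p$, so the corner lies on an actual edge and genuinely extends one side.
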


\begin{proof}
We will recursively construct the required two finite sequences that in fact satisfy the following two additional properties:
\begin{itemize}
\item[(6)] $X_p$ is the set of  vertices in $X$ adjacent to the last vertex of $Y_p$ but preceding or equal to the last vertex of $X_p$ for each $1\leq p\leq n$;
\item[(7)] $Y_p$ is the set of vertices in $Y$ adjacent to the last vertex of $X_p$ but preceding or equal to the last vertex of $Y_p$ for each $1\leq p\leq n$.
\end{itemize}	

For the base step, let	$X_1$ be the set of vertices adjacent to the last vertex $y_L$ of $Y$
	 and let
	$Y_1$ be the set of vertices adjacent to the last vertex $x_L$ of $X$.
	It is clear that 
	$X_1$ and $Y_1$ are nonempty because $G$ is connected.
	
	Assume $X_1$ is not an end segment of $X$. Then there exists $x'\in X_1$ 
	and $x\in X\backslash X_1$  
	such that $x'<_X x$. By definition,
	$(x',y_L)\in E$. Since $G$ is connected and $(x, y_L)\notin E$, it follows that $(x, y)\in E$ for some $y<_Y y_L$. The definition of strong ordering implies tha $(x, y_L)\in E$, a contradiction.
	Hence, $X_1$ is an end segment of $X$. Similarly, it can be shown that $Y_1$ is an end segment of $Y$.
	
To complete the base step, it remains to show that 
	$(x,y)\in E$ whenever $x\in X_1$ and $y\in Y_1$ for then $E(G_1)= E(G[X_1\cup Y_1])= X_1\times Y_1$. Suppose $x\in X_1$ and $y\in Y_1$.
	If $x=x_L$ or $y=y_L$, we are done. Assume $x\neq x_L$ and $y\neq y_L$.
	By definition, 
	$(x, y_L), (x_L, y)\in E$.
	Since $x<_X x_L$ and $y <_Y y_L$, similarly,  $(x, y)\in E$ due to the strong ordering.
	
	For the recursive step, suppose a sequence $X_1, X_2, \dotsc, X_l$ 
	of nonempty intervals of $X$
	and a sequence $Y_1, Y_2, \dotsc, Y_l$ 
	of nonempty intervals of $Y$ 
	have been constructed such that properties (1)--(7) hold with $n$ replaced by $l$. 
 It suffices to show that our recursive construction can be continued as long as 
	$\bigcup_{i=1}^l X_i\neq X$ or	$\bigcup_{i=1}^l Y_i\neq Y$.
		
	Assume at least one of $X\backslash \bigcup_{i=1}^l X_i$ or $Y\backslash \bigcup_{i=1}^l Y_i$ is nonempty.
	Let $x^\ast$ be the last vertex in $X$
	incident on an edge not in $E(G_l)$ and let
	$y^\ast$ be the last vertex in $Y$
		incident on an edge not in $E(G_l)$.
		Since $G_l\neq G$ and  $G$ is connected, $x^\ast$ and $y^\ast$ are well-defined.
	Let $X_{l+1}$ be the set of  vertices in $X$ adjacent to $y^\ast$ but preceding or equal to $x^\ast$
			 and vice versa let
			$Y_{l+1}$ be the set of vertices in $Y$ adjacent to $x^\ast$ but preceding or equal to $y^\ast$. By the definitions, it can be verified that both $X_{l+1}$ and $Y_{l+1}$ are nonempty. Furthermore, it can be argued as in the base step that $X_{l+1}$ is an interval of $X$ with last vertex $x^\ast$, $Y_{l+1}$ is an interval of $Y$ with last vertex $y^\ast$, and $X_{l+1}\times Y_{l+1}\subseteq E$. Hence, properties (6) and (7)
			are true for the instance $p=l+1$. 

Note that if $x^\ast \notin \bigcup_{i=1}^l X_i$, then it is the last vertex 
in $X\backslash \bigcup_{i=1}^l X_i$.
Let the last vertex of $X_l$ (respectively $Y_l$) be denoted by $x_{L,l}$ (respectively $y_{L,l}$).
Suppose $x^\ast \in \bigcup_{i=1}^l X_i$.
We argue that $x^\ast$ precedes $x_{L,l}$ by contradiction. Assume $x_{L,l}\leq_X x^\ast$. Since $x^\ast$ is incident on an edge not in $E(G_l)$, 
it follows that $x^\ast$ is adjacent to some vertex
$y\in Y\backslash \bigcup_{i=1}^l Y_i$.
However, $y<_Y y_{L,1}$ and $(x_{L,1}, y_{L,1})\in E$. Hence, due to the strong ordering or simply because $x^\ast=x_{L,l}$, it follows that $x_{L,1}$ is adjacent to $y$. However, this would contradict the fact that $Y_l$ is the set of vertices in $Y$ adjacent to $x_{L,l}$ but preceding or equal to $y_{L,l}$. Therefore, $x^\ast$ either belongs to $X_l\backslash \{x_{L,l}\}$ or is the last vertex 
in $X\backslash \bigcup_{i=1}^l X_i$. Similarly,  
$y^\ast$ either belongs to $Y_l\backslash \{y_{L,l}\}$ or is the last vertex 
in $Y\backslash \bigcup_{i=1}^l Y_i$.

Now, using the strong ordering and the fact that $X_{l}\times Y_{l}\subseteq E$, it can be shown that if $x^\ast\in X_l\backslash \{x_{L,l}\}$, then
$\{\,x\in X_L\mid x\leq_X x^\ast \,\} \subseteq X_{l+1}$. Hence, since $X_{l+1}$ is an interval of $X$ with last vertex $x^\ast$, regardless of whether 
$x^{\ast}$ belongs to $X_l\backslash \{x_{L,l}\}$ or is the last vertex 
in $X\backslash \bigcup_{i=1}^l X_i$,
it follows that $X_l\nsubseteq X_{l+1}$, $\bigcup_{i=1}^{l+1} X_i$ is an end segment of $X$ and $X_{l+1}$ is an initial segment of $\bigcup_{i=1}^{l+1} X_i$. Similarly, it can be shown that $Y_l\nsubseteq Y_{l+1}$   $\bigcup_{i=1}^{l+1} Y_i$ is an end segment of $Y$ and $Y_{l+1}$ is an initial segment of $\bigcup_{i=1}^{l+1} Y_i$.
Furthermore, if $x^\ast\in X_l \backslash \{x_{L,l}\}$, then it is adjacent to some $y\in Y_{l+1}\backslash \bigcup_{i=1}^l Y_i
=Y_{l+1}\backslash Y_l$. Otherwise, $x^\ast$ is the last vertex in $X_{l+1}\backslash \bigcup_{i=1}^l X_i= X_{l+1}\backslash X_l$. Therefore, at least one of  $X_{l+1}\backslash X_l   $ or $Y_{l+1}\backslash Y_l$ is nonempty.

Finally, to complete the recursion step, it remains to show that	
$E(G_{l+1})=E(G_{l})\cup (X_{l+1}\times Y_{l+1})$. Since $X_{l+1}\times Y_{l+1}\subseteq E$,
it suffices to show that $x\in X_{l+1}$ and $y\in Y_{l+1}$ whenever $(x,y)\in E(G_{l+1})\backslash E(G_l)$. Suppose not. Then we may assume that
$(x,y)\in E(G_{l+1})\backslash E(G_l)$ for some
$x\in X_{l+1}\backslash \bigcup_{i=1}^l X_i$ and $y\in (\bigcup_{i=1}^l Y_i) \backslash Y_{l+1}$ as the other case is  similar. Note that $y^\ast<_Y y$ because $Y_{l+1}$ is an initial segment of $\bigcup_{i=1}^{l+1} Y_i$ and $y^\ast \in Y_{l+1}$. However, since $(x,y)\notin E(G_l)$, this contradicts the definition of $y^\ast$. 
\end{proof}

\begin{remark}
The sequence constructed in the proof of Lemma~\ref{2508a} in fact satisfies and additional property that at least one of 
	$X_{p} \cap X_{p+1}$
	or $Y_{p}\cap Y_{p+1}$ is nonempty 
for each $1\leq p\leq n-1$. However, this fact is not needed in the proof of the next main theorem.
\end{remark}

\begin{theorem}\label{0309c}
	Every bipartite permutation graph is Parikh word representable.
\end{theorem}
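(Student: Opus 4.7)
My plan is to reduce via Theorem~\ref{2906a} to the connected case, invoke Theorem~\ref{1207a} and Lemma~\ref{2508a} to obtain a strong ordering $(<_X,<_Y)$ and a decomposition $(X_p,Y_p)_{p=1}^n$, and induct on $n$, the length of this decomposition.

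The base case $n=1$ is immediate: $G$ is the complete bipartite graph $K_{|X_1|,|Y_1|}$, and $\mathcal{G}(a_1^{|X_1|}a_2^{|Y_1|})\cong G$ over $\Sigma_2$, because every occurrence of $a_1$ precedes every occurrence of $a_2$ in this word.

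For $n\geq 2$, the plan is to first apply the inductive hypothesis to a bipartite permutation subgraph built from the first $n-1$ decomposition steps, obtaining a word $w'$ over some $\Sigma_s$, and then extend $w'$ to $w$ by splicing letters for the newly-introduced vertices $A_n=X_n\setminus X^{(n-1)}$ and $B_n=Y_n\setminus Y^{(n-1)}$. By property~(1) of Lemma~\ref{2508a}, the set $A_n$ sits below $X^{(n-1)}$ in $<_X$ and $B_n$ below $Y^{(n-1)}$ in $<_Y$, so the new letters should be inserted near the beginning of $w'$. The letters chosen for each new vertex and their exact positions are determined by the biclique $X_n\times Y_n$ that must be added: a vertex of $A_n$ whose required neighbourhood in $Y_n$ lies entirely in $B_n$ can take a fresh odd letter, whereas a vertex of $A_n$ that must also be adjacent to old vertices of $Y_n\cap Y^{(n-1)}$ must take a letter adjacent in $\Sigma$ to the letters of those old vertices; the analogous statement holds for $B_n$.

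The hard part will be verifying that the insertion produces exactly the edges of $X_n\times Y_n$ and no others. This requires a case analysis over (i) potential edges between two newly inserted letters, (ii) potential edges between a new letter and an old letter of $w'$, and (iii) potential non-edges between the new letters and old vertices lying outside $X_n\times Y_n$. Properties~(1)--(2) of Lemma~\ref{2508a} pin down precisely which old vertices belong to the overlap sets $X_n\cap X^{(n-1)}$ and $Y_n\cap Y^{(n-1)}$, and hence which Parikh-adjacencies the new letters should acquire; the initial-/end-segment structure is exactly what makes the insertion both well defined and locally correct. A secondary subtlety is establishing that the sub-structure fed into the induction hypothesis is itself a bipartite permutation graph admitting a decomposition of length $n-1$, which relies on the inheritance of properties~(1)--(5) of Lemma~\ref{2508a} under truncation together with the restriction of the strong ordering to $X^{(n-1)}\cup Y^{(n-1)}$.
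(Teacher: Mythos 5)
Your overall strategy is the paper's own: reduce to the connected case via Theorem~\ref{2906a}, take a strong ordering, decompose via Lemma~\ref{2508a}, and build the representing word block by block along that decomposition, inserting letters for $X_{p}\backslash X_{p-1}$ and $Y_{p}\backslash Y_{p-1}$ at positions dictated by the overlaps. However, as written there is a genuine gap: the induction hypothesis you invoke is too weak to support the inductive step. Knowing merely that $G_{n-1}$ is isomorphic to $\mathcal{G}(w')$ for \emph{some} word $w'$ tells you nothing about \emph{which} letters and \emph{which} positions of $w'$ the vertices of $X_n\cap X^{(n-1)}$ and $Y_n\cap Y^{(n-1)}$ occupy, so the phrase ``inserted near the beginning of $w'$'' and the choice of ``a letter adjacent in $\Sigma$ to the letters of those old vertices'' are not yet meaningful. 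The hypothesis must be strengthened to carry (at least) the requirement that the isomorphism $G_{n-1}\cong\mathcal{G}(w')$ matches the strong ordering inherited from $(<_X,<_Y)$ with the canonical strong ordering on $\mathcal{G}(w')$ from Lemma~\ref{0409b}; this is precisely condition (3) in the paper's recursive construction.

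The more serious half of the same gap is that ``a letter adjacent in $\Sigma$ to the letters of those old vertices'' need not exist unless you also control how many distinct letters the block $X_{n-1}\cup Y_{n-1}$ uses. Every vertex of $X_n\backslash X^{(n-1)}$ must be adjacent to \emph{all} of $Y_n$ (the new edges form the biclique $X_n\times Y_n$ and these vertices have no other edges), so its letter must be $\Sigma$-adjacent to every letter occurring among $Y_n\cap Y^{(n-1)}$. If those old vertices carried letters differing by more than two, no such letter exists and the construction breaks. The paper forestalls this with the additional invariant that the subword corresponding to $X_l\cup Y_l$ is always binary or ternary, and a substantial part of its case analysis (Case~1 versus Case~2, and Subcases~2.1/2.2, where the new $X$-letters are sometimes a fresh $a_{s+1}$ and sometimes a reused $a_{s-1}$ squeezed between the $a_{s-2}$'s and the $a_s$'s) exists exactly to re-establish that invariant at each step. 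Your sketch does not identify this invariant, and your per-vertex dichotomy for assigning letters to $A_n$ is also slightly off: all vertices of $A_n$ have identical neighbourhoods ($Y_n$), so they all receive the same letter; the real case split is on whether the previous block was binary or ternary and where the overlap $Y_n\cap Y^{(n-1)}$ sits inside it. Until the strengthened invariant is stated and shown to propagate, the ``hard part'' you defer cannot be completed.
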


\begin{proof}
	By Theorem~\ref{2906a} and Remark~\ref{0409a}, it suffices to consider only connected bipartite permutation graphs. Suppose $G=(X,Y,E)$ is a connected bipartite permutation graph. Let $(<_X, <_Y)$ be a strong ordering on the vertices of $G$. Let $X_1, X_2, \dotsc, X_n$ and $Y_1, Y_2, \dotsc, Y_n$ for some $n$ be sequences given by Lemma~\ref{2508a}. It can be deduced using properties  $(1)$ and $(3)$ 
	in Lemma~\ref{2508a}
	that $X_l\cap X_{l+1}$  is an initial segment of $X_{l}$ and $X_{l+1}\backslash X_l <_X
	\bigcup_{i=1}^l X_i$	
	for each $1\leq l \leq n-1$. 
	(It is possible for $X_l\cap X_{l+1}$
	or $X_{l+1}\backslash X_l$ to be empty but
	$X_{l+1}$ is nonempty.)	
	Similarly $Y_l\cap Y_{l+1}$ is an initial segment of $Y_l$  
	and $Y_{l+1}\backslash Y_l <_Y
		\bigcup_{i=1}^l Y_i$ for each $1\leq l \leq n-1$.	
	 
	 For each $1\leq l \leq n$, let $G_l= G[\bigcup_{i=1}^l X_i \cup \bigcup_{i=1}^l Y_i ]$.
	Recursively, we will construct a sequence $w_1, w_2, \dotsc, w_n$ of words such that
	\begin{enumerate}
		\item $w_l$ is a proper subword of $w_{l+1}$ for each $1\leq l\leq n-1$;
		\item $\supp(w_l)=\Sigma_s$ for some $s$ for each $1\leq l\leq n$  ;
		\item For each $1\leq l\leq n$, there is an isomorphism $\varphi_l \colon V(G_l) \rightarrow V(\mathcal{G}(w_l) )$ between $G_l$ and $\mathcal{G}(w_l)$ that preserves the strong ordering
		on the vertices of $G_l$ inherited from $(<_X, <_Y)$ and the  strong ordering on the vertices of $\mathcal{G}(w_l)$ as given in Lemma~\ref{0409b}.
		Furthermore, under this isomorphism, the subword of $w_l$ that corresponds to $X_l\cup Y_l$ is either binary or ternary.
	\end{enumerate}
Since $G_n=G$, if our construction can be carried out, then $\varphi_n$ will be an isomorphism between $G$ and $\mathcal{G}(w_n)$ and thus $G$ is Parikh word representable.

	For the basis step, since $E(G_1)= X_1\times Y_1$, we can take $w_1=a_1^{\vert X_1\vert}a_2^{\vert Y_1\vert}$. There is a canonical isomorphism $\varphi_1$ between $G_1$ and $\mathcal{G}(w_1)$ with the required properties.
	
	For the recursion step, suppose $w_l$ has been constructed with $\supp(w_l)=\Sigma_s$ for some $s$ and let
	$\varphi_l \colon V(G_l) \rightarrow V(\mathcal{G}(w_l) )$ be an isomorphism 
	between $G_l$ and $\mathcal{G}(w_l) $
	with the stated property. 	
	
	\setcounter{case}{0}

	\begin{case}\label{2708a} The subword of $w_l$ that corresponds to $X_l\cup Y_l$ is binary. 
		
Since $X_l$ is an initial segment of $\bigcup_{i=1}^l X_i$ and $Y_l$ is an initial segment of $\bigcup_{i=1}^l Y_i$, by the strong ordering preserving property of the isomorphism $\varphi_l$, we may assume that
		$\varphi_l[X_l]= \{(a_{s-1},1), (a_{s-1},2), \dotsc, (a_{s-1}, \vert X_l\vert)\}$ and $\varphi_l[Y_l]=\{(a_{s},1),  \dotsc, (a_{s}, \vert Y_l\vert)\}$ as the other case is similar. 
		Since $X_l\times Y_l\subseteq E(G_l)$, due to the isomorphism, it follows that
		every letter in $w_l$ that is $a_s$  comes after 
		the $\vert X_l\vert$-th $a_{s-1}$  in $w_l$.

		Let $w_{l+1}$ be obtained from $w_l$ by inserting $a_s^{\vert Y_{l+1}\backslash Y_l\vert}$ immediately after the $\vert X_l\cap X_{l+1}\vert$-th $a_{s-1}$ in $w_l$
		(in case $X_l\cap X_{l+1}$ is empty,  immediately before the first $a_{s-1}$ in $w_l$)
		 and inserting $a_{s+1}^{ \vert X_{l+1}\backslash X_l\vert}$ immediately after the $\vert Y_l\cap Y_{l+1}\vert$-th $a_{s}$ in $w_l$ (in case $Y_l\cap Y_{l+1}$ is empty, immediately before the first $a_{s}$ in $w_l$). 
		 From the previous observation, note that the $a_s^{\vert Y_{l+1}\backslash Y_l\vert}$ newly  introduced in $w_{l+1}$ comes before every $a_s$ originally in $w_l$.

		 Since at least one of  $X_{l+1}\backslash X_l$ or $Y_{l+1}\backslash Y_l$ is nonempty, $w_l$ is a proper subword of $w_{l+1}$. Furthermore, $\supp(w_{l+1})$ is equal to $\Sigma_s$ or $\Sigma_{s+1}$.

Let $x_1, x_2, \dotsc, x_{\vert X_{l+1}\backslash X_l\vert}$ be the vertices in  $X_{l+1}\backslash X_l$ (provided it is nonempty) ordered according to $<_X$. Similarly, let $y_1, y_2, \dotsc, y_{\vert Y_{l+1}\backslash Y_l\vert}$ be the vertices in  $Y_{l+1}\backslash Y_l$, ordered according to $<_Y$.
Consider the mapping $\varphi_{l+1}\colon V(G_{l+1})     )\rightarrow V( \mathcal{G}(w_{l+1})  )$ defined by
	$$\varphi_{l+1}(z)=\begin{cases}
	\varphi_l(z)  &\text{if } z\in V(G_l) \text{ and } \varphi_l(z)= (a_i,j)\\
&\phantom{\text{if}} \text{ for some } 1\leq i \leq s-1 \text{ and } 1\leq j \leq \vert w_l\vert_{a_i};\\
(a_s, j'+\vert Y_{l+1}\backslash Y_l\vert  )&\text{if } z\in V(G_l) \text{ and }   \varphi_l(z)= (a_s,j')\\
&\phantom{\text{if}}\text{ for some }  1\leq j' \leq \vert w_l\vert_{a_s};\\
(a_{s+1},t) &\text{if } z=x_t \text{ for some } 1\leq t\leq \vert X_{l+1}\backslash X_l\vert;\\
(a_{s},t') &\text{if } z=y_{t'} \text{ for some } 1\leq t' \leq \vert Y_{l+1}\backslash Y_l\vert.
	\end{cases} $$

Due to the property of $\varphi_l$ and by the definition of $\varphi_{l+1}$,  
 the restriction of $\varphi_{l+1}$ to $V(G_l)$ is an isomorphism between the subgraph $G_l$ of $G_{l+1}$ and the subgraph of $\mathcal{G}(w_{l+1})$ induced by the subword $w_l$ of $w_{l+1}$ that preserves the corresponding strong orderings. Since
 $X_{l+1}\backslash X_l<_X \bigcup_{i=1}^{l}X_i$ and $Y_{l+1}\backslash Y_l <_Y \bigcup_{i=1}^{l}Y_i $, it follows that $\varphi_{l+1}$
 preserves the strong ordering
 		on the vertices of $G_{l+1}$ inherited from $(<_X, <_Y)$ and the  strong ordering on the vertices of $\mathcal{G}(w_{l+1})$ as given in Lemma~\ref{0409b}.

 Since $X_l\cap X_{l+1}$ is an initial segment of $X_{l}$, note that 
 	\begin{align*}
  	\varphi_{l+1}[X_{l+1}]={}&
  	\varphi_{l+1}[X_{l+1}\backslash X_l]\cup
  	  \varphi_{l+1}[X_l \cap X_{l+1}   ]\\	
  ={}&	\{(a_{s+1}, 1),  \dotsc, (a_{s+1}, \vert X_{l+1}\backslash X_l\vert)\} \cup \{(a_{s-1},1), 
 	\dotsc, (a_{s-1}, \vert X_l\cap X_{l+1}\vert)\}.
 	\end{align*}
Also, $\varphi_{l}[Y_l\cap Y_{l+1}] =\{(a_{s},1), (a_{s},2), \dotsc, (a_{s}, \vert Y_l\cap Y_{l+1}\vert)\}$ because $Y_l\cap Y_{l+1}$ is an initial segment of $Y_{l}$
and thus  
$$\varphi_{l+1}[Y_l\cap Y_{l+1}] =
\{(a_{s}, 1+ \vert Y_{l+1}\backslash Y_l\vert ), 
(a_{s}, 2+ \vert Y_{l+1}\backslash Y_l\vert ),
\dotsc, (a_{s}, \vert Y_{l+1}\vert  )\}.$$ Hence,  note that 
$$
\varphi_{l+1}[Y_{l+1}]={}
  	\varphi_{l+1}[Y_{l+1}\backslash Y_l]\cup
  	  \varphi_{l+1}[Y_l \cap Y_{l+1}   ]\\	
  ={} \{(a_{s},1), (a_s,2)  \dotsc, (a_{s}, \vert Y_{l+1}\vert)\}.$$	
 Furthermore, since $X_l\times Y_l\subseteq E(G_l)$, note that  
  $$E(G_{l+1})= 
  E(G_l)\cup (X_{l+1}\times Y_{l+1})=
   E(G_l)\cup \big[ (X_{l+1}\backslash X_l)\times Y_{l+1}\big] \cup 
\big[(X_{l+1}\times (Y_{l+1}\backslash Y_l)\big].
  $$

Therefore, to see that $\varphi_{l+1}$ is an isomorphism between $G_{l+1}$ and $\mathcal{G}(w_{l+1})$, it suffices to show that 
 for the graph $\mathcal{G}(w_{l+1})$ 
 \begin{enumerate}
 \item $N\big( (a_{s+1}, t) \big)= \varphi_{l+1}[Y_{l+1}]$  for each $1\leq t\leq \vert X_{l+1}\backslash X_l\vert$; and
 \item $N\big( (a_{s}, t') \big)= \varphi_{l+1}[X_{l+1}]$  for each $1\leq t'\leq \vert Y_{l+1}\backslash Y_l\vert$.
  \end{enumerate}

Since
the $a_s^{\vert Y_{l+1}\backslash Y_l\vert}$ newly introduced in $w_{l+1}$ comes before every $a_s$ originally in $w_l$, the $a_{s+1}^{\vert X_{l+1}\backslash X_l\vert}$ newly introduced in $w_{l+1}$ comes immediately after the \mbox{$\vert Y_{l+1}\vert$-th} $a_s$ in $w_{l+1}$ because $\vert Y_{l+1}\backslash Y_l\vert+ \vert Y_l \cap Y_{l+1}\vert =\vert Y_{l+1}\vert$.
Hence, by the definition of Parikh graph,
$N\big( (a_{s+1}, t) \big)= \varphi_{l+1}[Y_{l+1}]$  for each $1\leq t\leq \vert X_{l+1}\backslash X_l\vert$.
 Meanwhile, since $a_s^{\vert Y_{l+1}\backslash Y_l\vert}$ is introduced in $w_{l+1}$ immediately after the $\vert X_{l}\cap X_{l+1}\vert$-th $a_{s-1}$ in $w_l$ and 
 they come
 before
 $a_{s+1}^{\vert X_{l+1}\backslash X_l\vert}$ and the rest of the $a_s$'s in $w_{l+1}$, it follows that
$N\big( (a_{s}, t') \big)= \varphi_{l+1}[X_{l+1}]$  for each $1\leq t'\leq \vert Y_{l+1}\backslash Y_l\vert$.

Finally, the first component of any vertex in $\varphi_{l+1}[X_{l+1}\cup Y_{l+1}]=
\varphi_{l+1}[X_{l+1}]\cup \varphi_{l+1}[Y_{l+1}] $
is either $a_{s-1}$, $a_s$, or $a_{s+1}$. 
Therefore, under the isomorphism $\varphi_{l+1}$, the subword of $w_{l+1}$ that corresponds to $X_{l+1}\cup Y_{l+1}$ is either binary or ternary, where being binary is possible because one of $ X_{l+1}\backslash X_l$ or $X_l \cap X_{l+1}$ can be empty but not both.
	\end{case}

	\begin{case} The subword of $w_l$ that corresponds to $X_l\cup Y_l$ is ternary. 
	
The proof of this case resembles that of Case 1 and thus some argument will be handwaived due to similarity.		
Since $X_l$ is an initial segment of $\bigcup_{i=1}^l X_i$ and $Y_l$ is an initial segment of $\bigcup_{i=1}^l Y_i$, by the strong ordering preserving property of the isomorphim $\varphi_l$, we may assume that $\varphi_l[X_l]=\{(a_{s-1},1), (a_{s-1},2), \dotsc, (a_{s-1}, \vert X_l\vert)\}$ and $$\varphi_l[Y_l]=\{(a_{s},1), (a_{s},2), \dotsc, (a_{s}, \vert w_l\vert_{a_s}),
		(a_{s-2},1), (a_{s-2},2), \dotsc, (a_{s-2}, \vert Y_l\vert- \vert w_l\vert_{a_s})\}.$$ 

\begin{subcase}
$\vert Y_l\cap Y_{l+1}\vert \leq \vert w_l\vert_{a_s}$.

This implies that $\varphi_l[Y_l\cap Y_{l+1}]= \{(a_{s},1), (a_{s},2), \dotsc, (a_{s}, \vert Y_l\cap Y_{l+1}\vert)\}$. Hence, we are essentially back to Case~1. We claim without going into details that the construction of $w_{l+1}$ and $\varphi_{l+1}$ as in Case~1 works here.
\end{subcase}

\begin{subcase}
$\vert Y_l\cap Y_{l+1}\vert > \vert w_l\vert_{a_s}$.

This implies that
\begin{multline*}
\varphi_l[Y_l\cap Y_{l+1}]= \{(a_{s},1), (a_{s},2), \dotsc, (a_{s}, \vert  w_l\vert_{a_s}),\\ (a_{s-2},  1), (a_{s-2},2), \dotsc,  (a_{s-2}, \vert Y_{l+1}\cap Y_l\vert -\vert w_l\vert_{a_s} )   \}.
\end{multline*}		

Let $w_{l+1}$ be the word obtained from $w_l$ by inserting $a_s^{ \vert Y_{l+1}\backslash Y_l\vert}$ immediately after the \linebreak $\vert X_l\cap X_{l+1}\vert$-th $a_{s-1}$ in $w_l$ and inserting also $a_{s-1}^{ \vert X_{l+1}\backslash X_l\vert}$ immediately after the \linebreak
\mbox{$(\vert Y_l\cap Y_{l+1}\vert -\vert w_l\vert_{a_s}) $-th} $a_{s-2}$ in $w_l$.

Let $x_1, x_2, \dotsc, x_{\vert X_{l+1}\backslash X_l\vert}$ and  $y_1, y_2, \dotsc, y_{\vert Y_{l+1}\backslash Y_l\vert}$ be defined as in Case~1.
Consider the mapping $\varphi_{l+1}\colon V(G_{l+1})     )\rightarrow V( \mathcal{G}(w_{l+1})  )$  defined by
	$$\varphi_{l+1}(z)=\begin{cases}
	\varphi_l(z)  &\text{if } z\in V(G_l) \text{ and } \varphi_l(z)= (a_i,j)\\
	&\phantom{\text{if}}  \text{ for some }
1\leq i \leq s-2 \text{ and } 1\leq j \leq \vert w_l\vert_{a_i};\\
(a_{s-1}, j'+\vert X_{l+1}\backslash X_l\vert  )&\text{if } z\in V(G_l)\text{ and } \varphi_l(z)= (a_{s-1},j')\\
&\phantom{\text{if}}\text{ for some }
1\leq j' \leq \vert w_l\vert_{a_{s-1}};\\
(a_s, j''+\vert Y_{l+1}\backslash Y_l\vert  )&\text{if } z\in V(G_l) \text{ and }\varphi_l(z)= (a_s,j'')\\
&\phantom{\text{if}}\text{ for some }
1\leq j'' \leq \vert w_l\vert_{a_s};\\
(a_{s-1},t) &\text{if } z=x_{t} \text{ for some } 1\leq t \leq \vert X_{l+1}\backslash X_l\vert;\\
(a_{s},t') &\text{if } z=y_{t'} \text{ for some } 1\leq t' \leq \vert Y_{l+1}\backslash Y_l\vert.
\end{cases} $$
Similarly, $\varphi_{l+1}$
 preserves the strong ordering
 		on the vertices of $G_{l+1}$ inherited from $(<_X, <_Y)$ and the  strong ordering on the vertices of $\mathcal{G}(w_{l+1})$ as given in Lemma~\ref{0409b}.
 
To see that $\varphi_{l+1}$ is an isomorphism between $G_{l+1}$ and $\mathcal{G}(w_{l+1})$, it suffices to show that 
 for the graph $\mathcal{G}(w_{l+1})$ 
 \begin{enumerate}
 \item $
 N\big( (a_{s-1}, t) \big)= \varphi_{l+1}[Y_{l+1}]$, which is
 $$\quad\qquad\quad\{(a_{s},1),  \dotsc, (a_{s}, \vert  w_l\vert_{a_s}+ \vert Y_{l+1}\backslash Y_l\vert  ), (a_{s-2},  1),  \dotsc,  (a_{s-2}, \vert Y_{l+1}\cap Y_l\vert -\vert w_l\vert_{a_s} )   \},$$ for each $1\leq t\leq \vert X_{l+1}\backslash X_l\vert$ ; and
 \item $N\big( (a_{s}, t') \big)= \varphi_{l+1}[X_{l+1}]$, which is
 $$\hspace{-43mm}\{(a_{s-1},1), (a_{s-1},2), \dotsc, (a_{s-1}, \vert X_{l+1}\vert)\}$$
  for each $1\leq t'\leq \vert Y_{l+1}\backslash Y_l\vert$.
  \end{enumerate}

Since $X_l\times Y_l\subseteq E(G_l)$, due to the isomorphism $\varphi_l$, it follows that
every $a_{s-1}$ in $w_l$ comes after the $(\vert Y_l\vert-\vert w_l\vert_{a_s})$-th $a_{s-2}$ in $w_l$ and every $a_s$ in $w_l$ comes after the $\vert X_l\vert$-th $a_{s-1}$ in $w_l$. Using these two facts, it can be argued as in Case~1 that properties $(1)$ and $(2)$ above hold.

Finally, $w_{l+1}$ and $\varphi_{l+1}$ easily satisfy the other required properties.\qedhere
\end{subcase}
	\end{case}
\end{proof}

\section{Hierarchy of Parikh Word Representable Graphs}

Suppose $\mathcal{PWG}_s$ denote the class of bipartite graphs Parikh word representable by some word over $\Sigma_s$. By Theorems~\ref{1207b} and \ref{0309c},
$\bigcup_{s=1}^\infty \mathcal{PWG}_s$ is the class of bipartite permutation graphs. Clearly, $\mathcal{PWG}_s \subseteq \mathcal{PWG}_{s+1}$ for all $s\geq 1$. This hierarchy of Parikh word representable graphs is strictly proper and to see this, we are studying upper bounds on the diameters of connected Parikh graphs in terms of where they are among the hierarchy.

\begin{lemma}\label{2507a}
Suppose $s\geq 2$, $w\in \Sigma_s^*$, and $\core_{a_{1,s}}(w)=w$. Then $\mathcal{G}(w)$ is connected and its diameter is at most $s+1$.
\end{lemma}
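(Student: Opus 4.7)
The plan is to exploit the hypothesis $\core_{a_{1,s}}(w)=w$, which asserts that every letter of $w$ participates in some occurrence of the subword $a_1a_2\cdots a_s$. Equivalently, for every letter $a_i$ occurring at a position $p$ in $w$, there exist occurrences of $a_1,\ldots,a_{i-1}$ at positions strictly before $p$ (in ascending order) and occurrences of $a_{i+1},\ldots,a_s$ at positions strictly after $p$ (in ascending order). This is the only combinatorial fact about $w$ I intend to use.

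First I would prove the auxiliary claim that any two vertices of $\mathcal{G}(w)$ carrying the same label $a_i$ are at distance at most $2$. If $i<s$, let the later of the two positions be $p$; by the core condition pick an $a_{i+1}$ at some position $q>p$. The corresponding vertex $(a_{i+1},\cdot)$ is, by the definition of $\mathcal{G}(w)$, adjacent to both $a_i$-vertices, giving the desired path of length $2$. If $i=s$, I would apply the symmetric argument: take the earlier of the two positions $p$ and invoke the core condition to find an $a_{s-1}$ at some position $q<p$; the vertex $(a_{s-1},\cdot)$ is then adjacent to both $a_s$-vertices.

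Next I would build ``climbing paths.'' Starting from any vertex $u=(a_i,l)$ at position $p_i$, the core condition supplies an $a_{i+1}$ at some position $p_{i+1}>p_i$, yielding a neighbour $(a_{i+1},m_{i+1})$ in $\mathcal{G}(w)$; iterating, I obtain a walk
\[
(a_i,l)\,\text{--}\,(a_{i+1},m_{i+1})\,\text{--}\,\cdots\,\text{--}\,(a_j,m_j)
\]
of length $j-i$ from $u$ to some $a_j$-vertex, for any $j$ with $i\le j\le s$.

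Finally, for any two vertices $u=(a_i,l)$ and $v=(a_j,l')$, assume without loss of generality $i\leq j$. Concatenating a climbing path of length $j-i$ from $u$ to some $a_j$-vertex with a same-label detour of length at most $2$ from that $a_j$-vertex to $v$ gives a walk of length at most $(j-i)+2\leq(s-1)+2=s+1$. In particular every pair of vertices is joined by a walk, so $\mathcal{G}(w)$ is connected, and its diameter is at most $s+1$. The argument is essentially routine; the only place requiring care is the endpoint case $i=s$ in the same-label claim, where one cannot climb upwards and must instead descend via an $a_{s-1}$-vertex.
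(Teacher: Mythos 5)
Your proof is correct, but it takes a genuinely different route from the paper's. The paper argues by induction on $s$: it projects $w$ onto $\{a_1,\dots,a_s\}$ to obtain a smaller core word $w'$, applies the induction hypothesis to $\mathcal{G}(w')$, and then attaches every $a_{s+1}$-vertex to the single hub $(a_s,1)$ --- the point being that in a core word the first $a_s$ precedes every $a_{s+1}$ --- which increases the diameter bound by exactly one per added letter, starting from the bound of three in the binary base case. Your argument is direct and non-inductive: the same-label claim (distance at most $2$ via a common neighbour one level up, or one level down when the label is $a_s$) combined with the climbing paths yields the per-pair estimate $d\bigl((a_i,l),(a_j,l')\bigr)\le \vert i-j\vert +2$, of which the stated bound $s+1$ is just the worst case $i=1$, $j=s$. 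This is a mild refinement: it localizes the distance according to the labels of the two endpoints, information the inductive proof does not surface directly (related label-by-label estimates reappear in the proof of Theorem~\ref{2407a}). The one delicate point --- that an $a_s$-vertex cannot climb upward and must detour downward through an $a_{s-1}$ --- is exactly the point you flag, and your handling of it is sound.
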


\begin{proof}
We argue by induction on $ s $. For $ s= 2 $, $ w = a_1xa_2 $ for some $ x \in \Sigma_2^* $ as $\core_{a_1a_2}(w)=w$. Since every vertex $ (a_2, k) $ for $ 1 \leq k \leq \vert w \vert_{a_2} $ is adjacent to the vertex $ (a_1, 1) $ and every vertex $ (a_1, l) $ for $ 1 \leq l \leq \vert w \vert_{a_1} $ is adjacent to the vertex $ (a_2, \vert w \vert_{a_2}) $, it follows that $\mathcal{G}(w)$ is connected and its diameter is at most three.
Hence, the base step holds.

For the induction step, suppose $ w $ is a word over $ \Sigma_{s+1} $ and is an $\core_{a_{1,s+1}}(w)=w$. Let $ w' = \pi_{a_1, a_2, \dotsc, a_s}(w) $. Note that $ w' $ is an $a_{1,s}$-core word and thus by the  induction hypothesis, $ \mathcal{G}(w') $ is connected and its diameter is at most $ s + 1 $. 
Futhermore, $\mathcal{G}(w')$ is exactly the subgraph of $\mathcal{G}(w)$ induced by vertices of the form $(a_i, k)$ for $1\leq i\leq s$ and $1\leq k\leq \vert w\vert_{a_i}$.
Since every vertex $ (a_{s+1}, l) $ for $ 1 \leq l \leq \vert w \vert_{a_{s+1}} $ in $\mathcal{G}(w)$ is adjacent to the vertex $ (a_s, 1) $ in $\mathcal{G}(w)$ as $\core_{a_{1,s+1}}(w)=w$, it follows that $ \mathcal{G}(w) $ is connected. 

Now, we show that the diameter of $ \mathcal{G}(w) $ is at most $ s + 2 $. Suppose $ x=(a_i, k) $ and $ y=(a_j, l) $ are two distinct vertices of $ \mathcal{G}(w) $, where $ 1 \leq i\leq j \leq s+1, 1 \leq k \leq \vert w \vert_{a_i} $, and $ 1 \leq l \leq \vert w \vert_{a_j} $.
If $ 1 \leq i\leq  j \leq s $, then  $x$ and $y$ are also vertices of $ \mathcal{G}(w')$ and thus $d(x,y)\leq s + 1$. Else if $ i = j = s + 1 $, then both $x$ and $y$ are  adjacent to the vertex $ (a_s, 1)$ and thus in fact, $d(x,y)=2$. Otherwise, suppose  $1\leq i\leq s$ and $j=s+1$. Then $y$ is adjacent to the vertex $y'=(a_s, 1)$.  Therefore,
$d(x,y)\leq d(x,y')+d(y',y)\leq (s+1)
+1 =s+2$. 
\end{proof}

\begin{theorem}\label{2407a}
	Suppose $s\geq 2$ and $w\in \Sigma_s^*$ with $\core_{a_{1,s}}(w)\neq \lambda$. Assume $\mathcal{G}(w)$ is connected. Then the diameter of $G$ is at most $s+3$ when $s\geq 3$ and at most three when $s=2$.
\end{theorem}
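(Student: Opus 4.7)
The plan is to split the argument according to whether $s = 2$ or $s \geq 3$.

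For $s = 2$, I reduce to Lemma~\ref{2507a} by arguing that the hypotheses actually force $\core_{a_{1,2}}(w) = w$. A letter $a_1$ at position $p$ in $w$ is adjacent in $\mathcal{G}(w)$ only to $a_2$-vertices at positions greater than $p$, and is therefore isolated whenever no $a_2$ follows $p$ in $w$; symmetrically for each $a_2$. Since $\core_{a_{1,2}}(w) \neq \lambda$ ensures $\vert w \vert \geq 2$, connectedness of $\mathcal{G}(w)$ forbids isolated vertices, so every letter of $w$ participates in some $a_1 a_2$ subword occurrence. Thus $\core_{a_{1,2}}(w) = w$, and Lemma~\ref{2507a} yields diameter at most $s + 1 = 3$.

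For $s \geq 3$ the approach is to exploit a fixed \emph{central path}. Since $\core_{a_{1,s}}(w) \neq \lambda$ implies $\vert w \vert_{a_{1,s}} \geq 1$, I would pick one occurrence of $a_1 a_2 \dotsm a_s$ as a subword of $w$ at positions $p_1 < p_2 < \dotsb < p_s$ and let $v_i$ denote the corresponding vertex of $\mathcal{G}(w)$ for each $i$. The inequalities $p_i < p_{i+1}$ make consecutive $v_i, v_{i+1}$ adjacent, so the sequence $v_1, v_2, \dotsc, v_s$ is a path in $\mathcal{G}(w)$. The heart of the argument is the claim that every vertex $x$ of $\mathcal{G}(w)$ satisfies $d(x, v_i) \leq 2$ for some $i$; granted this, any two vertices $x, y$ with $d(x, v_i), d(y, v_j) \leq 2$ would satisfy
$$d(x, y) \leq d(x, v_i) + d(v_i, v_j) + d(v_j, y) \leq 2 + (s - 1) + 2 = s + 3.$$

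The claim itself reduces to a case analysis on $x = (a_i, l)$ at $p = \pos_{a_i}(w, l)$ compared with $p_i$. If $p = p_i$ then $x = v_i$. If $p < p_i$ and $i < s$, then $p < p_{i+1}$ makes $x$ adjacent to $v_{i+1}$, so $d(x, v_{i+1}) = 1$; the case $p > p_i$ with $i > 1$ is symmetric using $v_{i-1}$. The \emph{main obstacle} will be the boundary cases $i = s$ with $p < p_s$ and $i = 1$ with $p > p_1$, in which no direct adjacency with any $v_j$ is available. Here I would invoke connectedness of $\mathcal{G}(w)$ to force $x$ to have some neighbour, which must be an $a_{s-1}$-vertex at some position $q < p$ in the first situation (respectively an $a_2$-vertex at some $q > p$ in the second); the strict chain $q < p < p_s$ (respectively $p_1 < p < q$) then ensures that this neighbour is itself adjacent to $v_s$ (respectively $v_1$), giving $d(x, v_s) \leq 2$ (respectively $d(x, v_1) \leq 2$) and completing the claim.
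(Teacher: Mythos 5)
Your proposal is correct. The $s=2$ case is handled essentially as in the paper: connectedness forces every letter to contribute to $\vert w\vert_{a_1a_2}$, so $\core_{a_{1,2}}(w)=w$ and Lemma~\ref{2507a} applies. For $s\geq 3$, however, you take a genuinely different route. The paper first writes $w\equiv_1 w_1\,\core_{a_{1,s}}(w)\,w_2$ with $w_1\in\{a_2,\dotsc,a_s\}^*$ and $w_2\in\{a_1,\dotsc,a_{s-1}\}^*$, invokes the fact that $1$-equivalent words have the same Parikh graph, applies Lemma~\ref{2507a} inductively to bound the diameter of the core part by $s+1$, and then runs a case analysis on whether each of $x,y$ lies in $w_1$, $w_2$, or the core. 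You instead fix a single occurrence of $a_{1,s}$ as a ``central path'' $v_1,\dotsc,v_s$ and prove that every vertex lies within distance $2$ of it; your case analysis on $(a_i,l)$ at position $p$ versus $p_i$ is exhaustive, and the two boundary cases ($i=s$ with $p<p_s$, $i=1$ with $p>p_1$) are correctly resolved by connectedness, since any neighbour of such a vertex sits at a position on the correct side of $p$ and is therefore itself adjacent to $v_s$ (respectively $v_1$). The triangle inequality then gives $2+(s-1)+2=s+3$. What each approach buys: the paper's argument yields the sharper intermediate bound $d(x,y)\leq s+1$ whenever one of the two vertices corresponds to a letter of the core, and reuses machinery (Lemma~\ref{2507a}, the $1$-equivalence decomposition) developed for other purposes; your argument is more self-contained and elementary, needing neither Lemma~\ref{2507a} nor any facts about $1$-equivalence for the $s\geq 3$ case, at the cost of not isolating that finer estimate. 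Both establish the stated bound, which the example following the theorem shows is attained.
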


\begin{proof}
If $s=2$,  since $\mathcal{G}(w)$ is connected, the first (respectively last) letter of $w$ must be $a_1$ (respectively $a_2$)
and thus $\core_{a_1a_2}(w)=w$. Hence, the diameter of $\mathcal{G}(w)$ is at most three by Lemma~\ref{2507a}.

Suppose $s\geq3$. Note that $w\equiv_1 w_1 
\core_{a_{1,s}}(w)w_2$ for some $w_1\in \{a_2, a_3, \dotsc, a_s\}^*$ and
$w_2\in \{a_1, a_2, \dotsc, a_{s-1}\}^*$. Since words that are $1$-equivalent to each other have the same Parikh graph, we may assume $w= w_1
\core_{a_{1,s}}(w)w_2$.
Consider two vertices $x$ and $y$ of $\mathcal{G}(w)$. 
First, we assume one of them, say $x$, corresponds to a letter $a_i$ in $w$ that occurs in $\core_{a_{1,s}}(w)$. We will show that $d(x,y)\leq s+1$.

\setcounter{case}{0}
\begin{case}
$y$ also corresponds to a letter in $w$ that occurs in $\core_{a_{1,s}}(w)$.

Since $\core_{a_{1,s}}(\core_{a_{1,s}}(w))=	
\core_{a_{1,s}}(w)$, by Lemma~\ref{2507a}, the diameter of \linebreak $\mathcal{G}(\core_{a_{1,s}}(w))$ is at most $s+1$. It follows that $d(x,y)\leq s+1$.	
\end{case}

\begin{case}
$y$ corresponds to a letter $a_j$ in $w$ that occurs in $w_1$.	
	
\begin{subcase}
$2\leq j \leq s-1$.

We may assume $i\leq j$ as the other case is similar. By definition, $y$ is adjacent to the vertex $y'$ corresponding to the letter $a_{j+1}$ in $w$ that occurs last in $\core_{a_{1,s}}(w)$. Note that there exists an occurrence of the word $a_ia_{i+1}\dotsm a_{j+1}$ as a subword of $w$ for which $x$ corresponds to the letter $a_i$ in this occurrence and  $y'$ corresponds to the letter $a_{j+1}$ in this occurrence. Hence, $d(x,y')=j+1-i$. Therefore,
$d(x,y)\leq d(x,y')+d(y',y)=
j+1-i+1\leq (s-1)+1-1+1=s$.
\end{subcase}

\begin{subcase}
$j=s$	

Since $\mathcal{G}(w)$ is connected,  $y$ must be adjacent to a vertex $y'$ corresponding to a letter $a_{s-1}$ in $w$ that occurs in $w_1$. Hence,
$d(x,y)\leq d(x,y')+d(y',y)\leq s+1$,
where $d(x,y')\leq s$ by Case~2.1.
\end{subcase}
\end{case}

\begin{case}
	$y$ corresponds to a letter in $w$ that occurs in $w_2$.	
	
This is similar to Case 2.	
\end{case}

Finally, if neither  $x$ nor $y$ corresponds to a letter in $w$ that occurs in \linebreak
$\core_{a_{1,s}}(w)$, then we can choose a vertex $y'$ corresponding to a letter in $w$ that occurs in $\core_{a_{1,s}}(w)$ such that $d(y',y)=1 \text { or } 2$. 
Therefore, $d(x,y)\leq d(x,y')+d(y',y)\leq (s+1)+2=s+3$, where
$d(x,y')\leq s+1$ by what we have shown above.
	\end{proof}

The next example shows that the upper bound  in Theorem~\ref{2407a} is realizable.

\begin{example}
	Over $\{a<b<c<d<e\}$, the Parikh graphs 
	$\mathcal{G}(abab)$, $\mathcal{G}( bcabcab)	$, $\mathcal{G}( cdabcdab )	$, and $\mathcal{G}( deabcdeab)$ are path graphs of length $3$, $6$, $7$, and $8$ respectively.	
\end{example}

Surprisingly, we will see that if the requirement $\core_{a_{1,s}}(w)\neq \lambda$ is taken away, then the upper bound on the diameter of the connected Parikh graph $\mathcal{G}(w)$ increases significantly. 

\begin{lemma}\label{2809a}
Suppose $s\geq 3$ and $w\in \Sigma_s^*$ with $\supp(w)=\Sigma_s$. If $\mathcal{G}(w)$ is connected, then $\vert w\vert_{a_{i}a_{i+1}a_{i+2}}\neq 0$ for
every $1\leq i\leq s-2$.	
\end{lemma}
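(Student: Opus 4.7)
I will prove the lemma by contrapositive: assuming $\vert w\vert_{a_{i}a_{i+1}a_{i+2}}=0$ for some $1\leq i\leq s-2$, I will exhibit a disconnection of $\mathcal{G}(w)$. The key idea is that the absence of the subword $a_i a_{i+1} a_{i+2}$ forces every occurrence of $a_{i+1}$ in $w$ to be ``one-sided'': either no $a_i$ precedes it, or no $a_{i+2}$ follows it (otherwise concatenating would give the forbidden subword).

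More precisely, partition the vertices at the middle level as $L\cup R\cup I$, where $L$ is the set of $(a_{i+1},l)$ such that some $a_i$ appears before position $\pos_{a_{i+1}}(w,l)$ in $w$, $R$ is the set of $(a_{i+1},l)$ such that some $a_{i+2}$ appears after that position, and $I$ contains the remaining (neither-sided) occurrences. The assumption $\vert w\vert_{a_ia_{i+1}a_{i+2}}=0$ makes $L$ and $R$ disjoint. Now set
\[
A=\{(a_j,m)\mid 1\leq j\leq i\}\cup L,\qquad
B=\{(a_j,m)\mid i+2\leq j\leq s\}\cup R,
\]
so $V(\mathcal{G}(w))=A\cup B\cup I$. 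Since $\supp(w)=\Sigma_s$, both $A$ and $B$ contain $(a_1,1)$ and $(a_s,1)$ respectively, so both are nonempty.

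Next I verify there is no edge between $A$ and $B$. Edges in $\mathcal{G}(w)$ only join vertices whose first coordinates are consecutive letters, so the potential crossing edges are: (i) $(a_i,m)$ to $(a_{i+1},l)\in R$, which by definition of adjacency forces $\pos_{a_i}(w,m)<\pos_{a_{i+1}}(w,l)$, contradicting $(a_{i+1},l)\notin L$; and (ii) $(a_{i+1},l)\in L$ to $(a_{i+2},m)$, which similarly forces $(a_{i+1},l)\in R$, a contradiction. All other cross-pairs differ by at least two in their letter indices (level indices) and therefore cannot be adjacent, and there are no edges within the $a_{i+1}$-level itself. Hence $A$ and $B$ lie in distinct components, and moreover any vertex of $I$ is isolated (no $a_i$ before, no $a_{i+2}$ after), so $\mathcal{G}(w)$ is disconnected.

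The proof is essentially a careful bookkeeping argument, so the only genuine obstacle is being precise about what ``$L\cap R=\emptyset$'' buys us and correctly ruling out all edges between $A$ and $B$. Once the two disjointness properties ($L\cap R=\emptyset$ and no $A$--$B$ edge) are established, the conclusion is immediate from nonemptiness of $A$ and $B$, which is guaranteed by the hypothesis $\supp(w)=\Sigma_s$.
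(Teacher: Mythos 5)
Your proof is correct, but it runs in the opposite logical direction from the paper's. The paper argues directly: by connectivity it takes, over all pairs consisting of an $a_i$-vertex and an $a_{i+2}$-vertex, a path of smallest possible length, asserts that this path must have length two with an $a_{i+1}$-vertex in the middle, and then reads off from the two adjacencies that the corresponding letters $a_i,a_{i+1},a_{i+2}$ occur in $w$ from left to right. You instead prove the contrapositive by exhibiting an explicit cut: the sets $A$ (levels $\leq i$ together with the ``left-sided'' $a_{i+1}$-vertices $L$) and $B$ (levels $\geq i+2$ together with the ``right-sided'' $a_{i+1}$-vertices $R$), with $L\cap R=\emptyset$ forced by the absence of the subword $a_ia_{i+1}a_{i+2}$, and with the leftover $a_{i+1}$-vertices isolated. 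Your case analysis of potential $A$--$B$ edges is complete (only the level pairs $(i,i+1)$ and $(i+1,i+2)$ need checking, and both are killed by the disjointness of $L$ and $R$), and nonemptiness of $A$ and $B$ follows from $\supp(w)=\Sigma_s$ exactly as you say. What each approach buys: the paper's argument is shorter and directly produces the witnessing occurrence, but its step ``it is easy to see that this path must have length two'' silently relies on an argument about the level sequence of a shortest path (the last visit to level $i$ must be immediately followed by levels $i+1$ and $i+2$); your version avoids that gloss entirely and, as a bonus, describes the actual component structure of $\mathcal{G}(w)$ when the subword is missing, at the cost of more bookkeeping.
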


\begin{proof}
Fix an integer 	$1\leq i\leq s-2$. 
For any vertex $x$ in $\mathcal{G}(w)$ corresponding to a letter $a_i$ in $w$ and any vertex $y$ in $\mathcal{G}(w)$ corresponding to a letter $a_{i+2}$ in $w$, there exists a path in $\mathcal{G}(w)$ between $x$ and $y$ because $\mathcal{G}(w)$ is connected. Choose a path with such two endpoints having the smallest possible length. It is easy to see that this path must have length two where the intermediate vertex corresponding to a letter $a_{i+1}$ in $w$. By the definition of Parikh graph, the corresponding letters $a_i, a_{i+1}, a_{i+2}$ appear in $w$ from left to right. Hence, 
$\vert w\vert_{a_{i}a_{i+1}a_{i+2}}\neq 0$.	
\end{proof}

\begin{theorem}\label{0309a}
	Suppose $s\geq 2$ and $w\in \Sigma_s^*$. Assume $\mathcal{G}(w)$ is connected. Then the diameter of $G$ is at most $3s-3$.	
\end{theorem}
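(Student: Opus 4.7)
The plan is to induct on $s$. The base case $s=2$ is handled directly by Theorem~\ref{2407a}, which yields diameter at most $3 = 3(2)-3$. For the inductive step with $s \geq 3$, I first reduce to the case $\supp(w)=\Sigma_s$: if there were an internal gap in $\supp(w)$ (some $a_i$ missing with letters of strictly smaller and strictly larger index both present), then $\mathcal{G}(w)$ would split, since edges only cross between consecutive levels and level $i$ would be empty; this contradicts connectivity. Hence $\supp(w)$ is a contiguous range, and relabeling to an alphabet $\Sigma_k$ with $k<s$ lets the induction hypothesis supply the tighter bound $3k-3 \leq 3s-3$.

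Assuming $\supp(w)=\Sigma_s$, I would split on $\core_{a_{1,s}}(w)$. If $\core_{a_{1,s}}(w)\neq\lambda$, Theorem~\ref{2407a} gives diameter at most $s+3$, and since $s\geq 3$ the inequality $s+3\leq 3s-3$ holds, finishing this case.

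The genuinely new case is $\core_{a_{1,s}}(w)=\lambda$, so $w$ has no subword $a_1 a_2\cdots a_s$ despite having every letter in its support. Here I pass to the projection $w'=\pi_{a_1,\ldots,a_{s-1}}(w)$, whose Parikh graph is exactly the subgraph of $\mathcal{G}(w)$ induced by the vertices at levels $1$ through $s-1$. If $\mathcal{G}(w')$ happens to be connected, the induction hypothesis gives $\mathrm{diam}(\mathcal{G}(w'))\leq 3(s-1)-3=3s-6$; and since every level-$s$ vertex has a neighbor at level $s-1$ (by connectivity of $\mathcal{G}(w)$ together with the fact that level-$s$ vertices can only be adjacent to level $s-1$), at most two additional hops cover endpoints at level $s$, giving $\mathrm{diam}(\mathcal{G}(w))\leq 3s-4<3s-3$.

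The hard part will be the sub-case where $\mathcal{G}(w')$ is disconnected. Its components $C_1,\ldots,C_k$ are bridged inside $\mathcal{G}(w)$ only through level-$s$ vertices, and a naive bound that concatenates two inductive diameters with a level-$s$ bridge comes out on the order of $6s-10$, far worse than $3s-3$. To recover the correct bound, I would exploit the positional structure of the word: the components of each binary projection $\pi_{a_i,a_{i+1}}(w)$ appear in positional order along $w$, and Lemma~\ref{2809a} furnishes a subword $a_{s-2}a_{s-1}a_s$ acting as a short local bridge near the top. Trying the mirror projection $w''=\pi_{a_2,\ldots,a_s}(w)$ reduces to the connected-projection case whenever $\mathcal{G}(w'')$ is connected; only when both $\mathcal{G}(w')$ and $\mathcal{G}(w'')$ are disconnected is one forced into the more delicate combinatorial argument of tracking how the components of the two projections are threaded by level-$s$ and level-$1$ bridges, which I expect to be where the main technical difficulty lies.
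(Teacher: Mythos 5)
Your proposal sets up the right frame (induction on $s$, reduction to contiguous support, and the easy dispatch of the case $\core_{a_{1,s}}(w)\neq\lambda$ via Theorem~\ref{2407a} since $s+3\leq 3s-3$ for $s\geq 3$), but it does not prove the theorem: the case you yourself identify as ``the hard part'' --- where the projection $\pi_{a_1,\dotsc,a_{s-1}}(w)$ (and its mirror) has a disconnected Parikh graph --- is left entirely open. This is not a peripheral case; it is the heart of the theorem. Indeed, the extremal words of Theorem~\ref{0309e} realizing diameter $3s-3$ (e.g.\ $\mathcal{G}(\mathbf{cd}bc\mathbf{d}abcab)$) are precisely of this type: their top-level projections disconnect, and, as you correctly compute, gluing two inductive diameters through a single top-level bridge only yields a bound on the order of $6s$. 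So the proposal as written establishes the bound only on a proper subclass of connected Parikh graphs that excludes the words for which the bound is tight.

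The paper closes this gap by abandoning the global projection in favor of an \emph{overlapping positional} decomposition. Writing $w=w'w''w'''$ where the first $a_{s-1}$ of $w$ starts $w''$ and the last $a_{s+1}$ of $w$ ends $w''$ (well-defined by Lemma~\ref{2809a}, which you did invoke but for a different purpose), one sets $u=\pi_{a_s,a_{s+1}}(w'w'')$ and $v=\pi_{a_1,\dotsc,a_{s-2}}(w')\,\pi_{a_1,\dotsc,a_s}(w''w''')$. The point is that $\mathcal{G}(u)$ is a connected binary Parikh graph of diameter at most $3$ (its first letter is $a_s$ and its last is $a_{s+1}$), the induced copy of $\mathcal{G}(v)$ is connected because every $a_s$ in $v$ lies after the first $a_{s-1}$ of $w''$ and hence is adjacent to $(a_{s-1},1)$, and the two pieces cover all vertices while sharing the $a_s$-vertices of $w''$. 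The induction hypothesis applied to $v$ then gives $(3s-3)+3=3s$ directly, with no case analysis on connectivity of projections or on emptiness of the core. If you want to salvage your outline, you would need to supply an argument of comparable strength for the doubly-disconnected sub-case; simply noting that Lemma~\ref{2809a} provides a local $a_{s-2}a_{s-1}a_s$ bridge is not enough, since a single short bridge between two components each of diameter up to $3s-6$ cannot beat roughly $6s-10$.
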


\begin{proof}
We argue by induction on $s$. The base step $s=2$ again holds as in the proof of Theorem~\ref{2407a}.
For the induction step, suppose $w\in \Sigma_{s+1}^*$
and $\mathcal{G}(w)$ is connected.
We may assume $\supp(w)=\Sigma_{s+1}$ for otherwise $\vert \supp(w)\vert \leq s$ and we are done by the induction hypothesis and some relabelling of the letters as $\supp(w)$ must consist of consecutive letters of $\Sigma$ due to connectivity. 
Let $w=w'w''w'''$ where the first $a_{s-1}$ of $w$ is the first letter of $w''$ and  the last $a_{s+1}$ of $w$ is the last letter of $w''$.
Note that $w'$, $w''$, and $w'''$ are well-defined and $\vert w''\vert_{a_s}\neq 0$ because
$\vert w\vert_{a_{s-1}a_{s}a_{s+1}}\neq 0$ by Lemma~\ref{2809a}.
Let $u= \pi_{a_{s}, a_{s+1}}(w'w'')$ and $v= \pi_{a_1, a_2, \dotsc, a_{s-2}}(w')\pi_{a_1, a_2, \dotsc, a_s}(w''w''')$. 
Let $\widehat{\mathcal{G}(v)}$ 
denote the corresponding subgraph 
of $\mathcal{G}(w)$ induced by the subword $v$, which is isomorphic to $\mathcal{G}(v)$. 

Due to connectivity of $\mathcal{G}(w)$, the first letter of $u$ must be $a_s$. Clearly the last letter of $u$ is $a_{s+1}$ because the last letter of $w''$ is $a_{s+1}$. Hence, $\mathcal{G}(u)$ is connected with diameter at most three.  
Since $\mathcal{G}(w)$ is connected and the first $a_{s-1}$ of $w$ is the first letter of $w''$,
it follows that
every vertex in $\widehat{\mathcal{G}(v)}$ is connected within $\widehat{\mathcal{G}(v)}$ to some vertex corresponding to a letter $a_s$ of $w$ occurring in $v$, which in turn is adjacent to the vertex $(a_{s-1},1)$ in $\widehat{\mathcal{G}(v)}$. It follows that $\widehat{\mathcal{G}(v)}$ and thus
$\mathcal{G}(v)$ is connected as well.
Meanwhile, note that $\mathcal{G}(u)$ is identical to the subgraph of $\mathcal{G}(w)$ induced by the subword $u$.

By the induction hypothesis, the diameter of $\mathcal{G}(v)$ is at most $3s-3$. Note that
$V(\mathcal{G}(u))\cup V(\widehat{\mathcal{G}(v)})=V(\mathcal{G}(w))$
and
 $V(\mathcal{G}(u))\cap V(\widehat{\mathcal{G}(v)})$ is nonempty because it contains
 exactly vertices of $\mathcal{G}(w)$ corresponding to the letters $a_s$'s appearing in $w''$.
   Therefore,
the diameter of $\mathcal{G}(w)$ is at most $(3s-3)+3=3s$.
\end{proof}

\begin{theorem}\label{0309e}
For every integer $s\geq 2$, the longest path graph that is Parikh word representable over $\Sigma_s$ has length $3s-3$.	
\end{theorem}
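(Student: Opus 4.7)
The plan is to establish matching upper and lower bounds.

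For the upper bound, suppose a path graph $P$ of length $L$ is Parikh word representable over $\Sigma_s$; say $P \cong \mathcal{G}(w)$ for some $w \in \Sigma_s^*$. Being a path, $P$ is connected, so Theorem~\ref{0309a} gives that the diameter of $\mathcal{G}(w)$ is at most $3s - 3$. Since the diameter of a path equals its length, $L \leq 3s - 3$.

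For the matching lower bound, I propose the explicit word
\[
w_s \;:=\; (a_{s-1}\, a_s\, a_{s-2}\, a_{s-3} \cdots a_1)^2 \; (a_{s-1}\, a_{s-2} \cdots a_2) \;\in\; \Sigma_s^*,
\]
with the second factor being the empty word when $s = 2$. Then $|w_s| = 3s - 2$, and the positions of every letter are immediate from the formula: $a_1$ and $a_s$ each occur twice, while every $a_k$ with $2 \leq k \leq s-1$ occurs thrice. The claim is that $\mathcal{G}(w_s)$ is precisely the path
\[
(a_s, 1) - (a_{s-1}, 1) - (a_s, 2) - (a_{s-1}, 2) - (a_{s-2}, 1) - (a_{s-1}, 3) - (a_{s-2}, 2) - \cdots - (a_2, 3) - (a_1, 2)
\]
of length $3s - 3$, formed by concatenating a ``top zigzag'' $(a_s,1)-(a_{s-1},1)-(a_s,2)-(a_{s-1},2)$ with, for each $1 \leq i \leq s-2$, a further zigzag $(a_{i+1}, 2) - (a_i, 1) - (a_{i+1}, 3) - (a_i, 2)$ attached at the vertex $(a_{i+1}, 2)$.

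Verifying this claim reduces to two routine position comparisons: (i) each consecutive pair listed above is indeed an edge of $\mathcal{G}(w_s)$; and (ii) for every $1 \leq i \leq s-1$, no $(a_i, a_{i+1})$-edge exists besides the three listed. Both follow by tabulating the positions of $a_i$ and $a_{i+1}$ from the formula for $w_s$ and comparing: for each $i$, the occurrences of $a_i$ and $a_{i+1}$ interleave in a predictable zigzag pattern that admits exactly three $a_i a_{i+1}$-subwords. The bulk of the work lies in (ii), which is the main technical obstacle but dissolves once the word $w_s$ and its position structure are in hand; the remainder is mechanical case analysis.
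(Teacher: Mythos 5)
Your proposal is correct, and its logical skeleton matches the paper's: the upper bound is obtained exactly as in the paper, by invoking Theorem~\ref{0309a} together with the observation that the diameter of a path equals its length. Where you diverge is in the lower bound. The paper constructs its witness word recursively (starting from $a_1a_2a_1a_2$ and, at each step, prefixing $a_sa_{s+1}$ and inserting $a_{s+1}$ after the first $a_s$, while maintaining the inductive invariant that the first $a_s$ corresponds to an endpoint of the path), yielding e.g.\ $cdbcdabcab$ for $s=4$. You instead give the closed-form word $w_s=(a_{s-1}a_sa_{s-2}\dotsm a_1)^2(a_{s-1}\dotsm a_2)$, e.g.\ $cdbacdbacb$ for $s=4$, which is a genuinely different word. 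I checked your construction: for each $1\le j\le s-1$ the positions of the occurrences of $a_j$ and $a_{j+1}$ produce exactly the three edges $(a_j,1)-(a_{j+1},2)$, $(a_j,1)-(a_{j+1},3)$, $(a_j,2)-(a_{j+1},3)$ (with the obvious adjustments at $j=s-1$, where $a_s$ occurs only twice), and the $s-1$ resulting zigzags chain at shared endpoints into a path on $3s-2$ vertices. The trade-off is the usual one: the paper's recursion is shorter to state but defers verification to an unproved ``it can be verified'' step and must carry an extra invariant through the induction, whereas your closed form front-loads all the position bookkeeping into a single uniform computation with no induction at all. Your write-up still waves at that computation (``routine position comparisons''), so to be fully self-contained you should tabulate the positions — $a_j$ occurs at $s-j+1$, $2s-j+1$, $3s-j$ for $2\le j\le s-2$, etc. — and exhibit the three edges per level explicitly; once that table is written down the argument is complete.
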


\begin{proof}
By Theorem~\ref{0309a}, it suffices to argue by induction that for every integer $s\geq 2$, there exists a word $w$ over $\Sigma_s$ such that $\mathcal{G}(w)$ is a path graph of length $3s-3$. 
For the basis step $s=2$, we can take $w=a_1b_1a_1b_1$. For the induction step, suppose a word $w$ with $\supp(w)=\Sigma_s$ such that $\mathcal{G}(w)$ is a path graph of length $3s-3$ and that the first $a_s$ of $w$ corresponds to one of the endpoint of $\mathcal{G}(w)$ has been constructed.
Consider the word $w'$ obtained from $w$ by prefixing $w$ with $a_sa_{s+1}$ and inserting $a_{s+1}$ immediately after the first $a_s$ in $w$. Then it can be verified that $\mathcal{G}(w')$ is a path graph of length $3s$ such that the first $a_s$ of $w'$ corresponds to one of the endpoint of $\mathcal{G}(w')$. 
\end{proof}

\begin{example}
 Over $\{a<b<c<d<e\}$, the Parikh graphs 
 $\mathcal{G}(abab)$, $\mathcal{G}( \mathbf{bc}ab \mathbf{c}ab   )	$, $\mathcal{G}( \mathbf{cd}bc \mathbf{d}abcab )	$, and $\mathcal{G}( \mathbf{de}cd \mathbf{e}bcdabcab )$ are path graphs of length $3$, $6$, $9$, and $12$ respectively.	
\end{example}

\begin{corollary}\label{1907a}
$\mathcal{PWG}_s\subsetneq \mathcal{PWG}_{s+1}$
	for all integer $s\geq 1$. 	
\end{corollary}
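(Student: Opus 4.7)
The inclusion $\mathcal{PWG}_s \subseteq \mathcal{PWG}_{s+1}$ is already noted just after the definition of Parikh $n$-ary word representability, so my plan is entirely devoted to strictness. I would split the argument into the trivial case $s = 1$ and the case $s \geq 2$, which I reduce to the sharp diameter results just established in Section~5.

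For $s = 1$, any word over $\Sigma_1 = \{a_1\}$ contains no subword of the form $a_k a_{k+1}$, so every Parikh graph arising from such a word is edgeless. Thus $\mathcal{PWG}_1$ consists only of graphs without edges, whereas $\mathcal{G}(a_1 a_2) \in \mathcal{PWG}_2$ is a single edge, and strictness follows at once.

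For $s \geq 2$, the plan is to exhibit a connected graph in $\mathcal{PWG}_{s+1}$ whose diameter is strictly larger than the maximum possible diameter of any connected graph in $\mathcal{PWG}_s$. Applying Theorem~\ref{0309e} with $s$ replaced by $s+1$ yields a word over $\Sigma_{s+1}$ whose Parikh graph $P$ is a path of length $3(s+1) - 3 = 3s$; thus $P \in \mathcal{PWG}_{s+1}$ and the diameter of $P$ equals $3s$. If $P$ also belonged to $\mathcal{PWG}_s$, then $P$ would be isomorphic to $\mathcal{G}(w)$ for some $w \in \Sigma_s^*$, which is connected since $P$ is; Theorem~\ref{0309a} would then bound its diameter by $3s - 3$, contradicting the diameter of $P$ being $3s$. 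Hence $P \in \mathcal{PWG}_{s+1} \setminus \mathcal{PWG}_s$.

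I do not anticipate any real obstacle: the upper bound of Theorem~\ref{0309a} for alphabet size $s$ and the realisability bound of Theorem~\ref{0309e} for alphabet size $s+1$ already align perfectly, leaving the strict gap $3s > 3s - 3$. The only thing to double-check is that isomorphism preserves membership in $\mathcal{PWG}_s$ (which is immediate from the definition) and that the extremal path is connected (which is built into the construction in the proof of Theorem~\ref{0309e}).
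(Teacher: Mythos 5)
Your proof is correct and takes essentially the same route as the paper: the paper simply cites Theorem~\ref{0309e} (a path of length $3(s+1)-3=3s$ is representable over $\Sigma_{s+1}$ but exceeds the maximum path length $3s-3$ achievable over $\Sigma_s$), which is exactly the witness you construct, with the diameter bound of Theorem~\ref{0309a} doing the same exclusionary work. Your separate treatment of $s=1$ is a minor but welcome addition, since Theorem~\ref{0309e} is only stated for $s\geq 2$.
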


\begin{proof}
The  inclusion is strict by Theorem~\ref{0309e}.	
	\end{proof}

\section{Parikh Binary or Ternary  Word Representability}

 The following characterization of Parikh binary word representability that has not appeared explicitly is more in line with the characterization of bipartite permutation graphs in terms of  neighborhoods \cite[Theorem~1]{spinrad1987bipartite}.

\begin{theorem}\label{0207a}
	Suppose $G=(X,Y,E)$ is a connected bipartite graphs. Then $G$ is Parikh binary word representable if and only if 
	$N(x)\subseteq N(x')$ or $N(x')\subseteq N(x)$ whenever $x,x'\in X$.
\end{theorem}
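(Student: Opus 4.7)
The plan is to handle the two implications by very different means: the forward direction follows from a direct reading of the definition of $\mathcal{G}(w)$ in the binary case, while the converse calls for an explicit word construction from the chain of neighborhoods.

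For the forward direction, I would assume $G \cong \mathcal{G}(w)$ for some word $w$ over $\{a<b\}$. Because $G$ is connected, its bipartition is unique, so the isomorphism sends $X$ either to the set of $a$-vertices or to the set of $b$-vertices of $\mathcal{G}(w)$. In the first case, by the definition of the Parikh graph the neighborhood of the $i$-th $a$-vertex (counting from the left) is exactly the set of $b$-vertices occurring at positions greater than $\operatorname{pos}_a(w,i)$; as $i$ grows this position increases, so these neighborhoods form a descending chain under inclusion. The second case is symmetric: the neighborhood of the $j$-th $b$-vertex is the set of $a$-vertices at positions less than $\operatorname{pos}_b(w,j)$, which grows with $j$. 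Either way the chain condition on $X$ follows.

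For the converse, assume the chain condition and list $X = \{x_1, \ldots, x_m\}$ so that $N(x_1) \supseteq N(x_2) \supseteq \cdots \supseteq N(x_m)$. Connectivity of $G$ forces $N(x_1) = Y$, since no $y \in Y$ is isolated and every $y \in N(x_i)$ already satisfies $y \in N(x_1)$ by the chain. Define $D_i = N(x_i) \setminus N(x_{i+1})$ for $1 \leq i \leq m$, with the convention $N(x_{m+1}) = \emptyset$, so that $\{D_1, \ldots, D_m\}$ partitions $Y$. I would then consider the word
\[
w = a \, b^{|D_1|} \, a \, b^{|D_2|} \, a \cdots a \, b^{|D_m|}.
\]
A direct check from the definition of $\mathcal{G}(w)$ shows that the neighborhood of the $i$-th $a$-vertex in $\mathcal{G}(w)$ is the union of the $b$'s lying in the blocks indexed by $i, i+1, \ldots, m$, and this union telescopes to $N(x_i)$. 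Sending $x_i$ to this $i$-th $a$-vertex and distributing the vertices of $D_i$ bijectively among the $b$-vertices produced by the $i$-th block yields the required isomorphism $G \cong \mathcal{G}(w)$.

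I do not expect a genuine obstacle: the main conceptual point is the observation that the chain condition encodes exactly the information needed to recover the interleaving of $a$'s and $b$'s in a binary word. Potential equalities $N(x_i) = N(x_{i+1})$ correspond to empty blocks $D_i = \emptyset$ and cause no difficulty, and the connectivity hypothesis is precisely what prevents any leftover $y$-vertex from needing to be placed before the first $a$, where it would be forced to be isolated.
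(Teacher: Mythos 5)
Your proposal is correct and follows essentially the same route as the paper: the forward direction is read off from the definition of $\mathcal{G}(w)$ in the binary case, and the converse builds the identical word $ab^{|N(x_1)\setminus N(x_2)|}ab^{|N(x_2)\setminus N(x_3)|}\dotsm ab^{|N(x_m)|}$ from the descending chain of neighborhoods and verifies the isomorphism blockwise. Your $D_i$ notation and the observation that empty blocks are harmless match the paper's argument in substance.
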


\begin{proof}
	Suppose $ G $ is Parikh binary word representable. Then without loss of generality, $ G= \mathcal{G}(w) $ for some word $ w$ over $\Sigma=\{a<b\}$. By definition, $X$ either consists of vertices corresponding to the letters in $w$ which are $a$ or 
	consists of vertices corresponding to the letters in $w$
	which are $b$. In either case, it is clear that $N(x)\subseteq N(x')$ or $N(x')\subseteq N(x)$ whenever $x,x'\in X$.

	Conversely, suppose	$N(x)\subseteq N(x')$ or $N(x')\subseteq N(x)$ whenever $x,x'\in X$. Then due to finiteness of the neighborhoods,
	there is some listing $x_1, x_2, \dotsc, x_{\vert X\vert}$ of the elements of $X$ such that $N(x_1)\supseteq N(x_2)\supseteq \dotsb \supseteq N(x_{\vert X\vert})$. Due to connectivity, it follows that $N(x_{\vert X\vert})\neq \emptyset$
	and $N(x_1)=Y$.	
	Let $y_1, y_2, \dotsc, y_{\vert Y\vert}$ be some listing  of the elements of $Y$ such that for every $1\leq i \leq \vert X\vert$, $N(x_i)= 
\{y_{\vert Y\vert-\vert N(x_i)\vert+1}, y_{l+1}, \dotsc, y_{\vert Y\vert}\}$.	
	Let
	$$w=  ab^{\vert N(x_1)\backslash N(x_2)\vert}ab^{\vert N(x_2)\backslash N(x_3)\vert}\dotsm ab^{\vert N(x_{\vert X\vert-1})\backslash N(x_{\vert X\vert})\vert}ab^{\vert N(x_{\vert X\vert}) \vert}.$$
Consider the mapping $\varphi \colon X\cup Y \rightarrow V(\mathcal{G}(w))$ defined by $\varphi(x_i)= (a,i)$ for $1\leq i\leq \vert X\vert$
and $\varphi(y_i)= (b,i)$ for $1\leq i\leq \vert Y\vert$. We claim that $\varphi$ is an isomorphism between $G$ and $\mathcal{G}(w)$. Suppose $1\leq i\leq \vert X\vert$. 
Since $N(x_i)= 
\{y_{\vert Y\vert-\vert N(x_i)\vert+1}, y_{l+1}, \dotsc, y_{\vert Y\vert}\}$, it suffices to show that $\varphi(x_i)$ is adjacent to
$\varphi(y_j)$ in $\mathcal{G}(w)$ if and only if
$\vert Y\vert-\vert N(x_i)\vert+1\leq j\leq \vert Y\vert$,  provided $N(x_i)$ is nonempty. From the definition of $w$, there are exactly 
$$\vert N(x_1)\backslash N(x_2)\vert
+\dotsb+ \vert N(x_{i-1})\backslash N(x_i)\vert=
   \vert N(x_1)\vert-\vert N(x_i)\vert= \vert Y\vert -\vert N(x_i)\vert$$ many $b$'s before the $i$-th $a$. Hence, by definition, the vertex $(a,i)$ in $\mathcal{G}(w)$ is adjacent to the vertex $(b,j)$ for every $\vert Y\vert-\vert N(x_i)\vert+1\leq j\leq \vert Y\vert$ as required.
\end{proof}

Using Theorem~\ref{0207a}, we now give an alternative and direct proof (without using induction) of the following clever known result. It is a well-established fact that a graph is chordal if and only if it is an intersection graph of subtrees of a tree (see Theorem 1.2.3 of \cite{brandstadt1999graph}).  

\begin{theorem}\cite{bera2016structural}\label{1807c}
	A connected bipartite graph $G=(X,Y,E)$ is Parikh binary word representable if and only if 
	\begin{enumerate}
		\item  $G$ is a $(6,2)$ chordal graph; and
		\item there are two adjacent vertices whose degree sum is the same as the number of vertices of $G$.	
	\end{enumerate}	
\end{theorem}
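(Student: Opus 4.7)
The plan is to use Theorem~\ref{0207a} as a bridge: both directions reduce to (using or deriving) the nested-neighborhood condition $N(x)\subseteq N(x')$ or $N(x')\subseteq N(x)$ for all $x,x'\in X$.

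For the forward direction, assume $G$ is Parikh binary word representable, so by Theorem~\ref{0207a} the family $\{N(x):x\in X\}$ forms a chain under inclusion. Listing the vertices so that $N(x_1)\supseteq N(x_2)\supseteq\dotsb\supseteq N(x_{|X|})$, each $N(y)$ takes the form $\{x_1,\dotsc,x_{k_y}\}$ for some index $k_y$, so the $Y$-side neighborhoods are likewise nested. Connectedness then forces the existence of $x^*\in X$ with $N(x^*)=Y$ (indeed $N(x_1)=Y$) and, symmetrically, of $y^*\in Y$ with $N(y^*)=X$; their adjacency together with $\deg(x^*)+\deg(y^*)=|Y|+|X|=|V(G)|$ gives condition~(2). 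To obtain condition~(1), consider any cycle $x_1y_1x_2y_2\dotsm x_ky_k$ of length $2k\geq 6$ and choose $x_i$ minimizing $N(x_i)$ among the $x_j$'s on the cycle. Without loss of generality $i=1$, so the cycle-neighbors $y_1,y_k$ of $x_1$ lie in $N(x_j)$ for every $j$; because $k\geq 3$, this makes $x_2y_k$ and $x_3y_1$ two distinct chords.

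For the reverse direction, assume (1) and (2), with adjacent pair $x^*\in X$, $y^*\in Y$ witnessing the degree-sum equality. Since $N(x^*)\subseteq Y$ and $N(y^*)\subseteq X$ with $|N(x^*)|+|N(y^*)|=|X|+|Y|$, equality forces $N(x^*)=Y$ and $N(y^*)=X$. Suppose, for contradiction, that nesting fails: there exist $x_1,x_2\in X$, $y_1\in N(x_1)\setminus N(x_2)$, and $y_2\in N(x_2)\setminus N(x_1)$. A short check confirms that $x_1,x_2,x^*,y_1,y_2,y^*$ are six distinct vertices (for instance $x^*\neq x_1$ because $y_2\in N(x^*)\setminus N(x_1)$), so
\[ x_1 - y_1 - x^* - y_2 - x_2 - y^* - x_1 \]
is a genuine $6$-cycle whose edges all exist in $G$. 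Since $G$ is bipartite, the only possible chord joins vertices in opposite parts at cyclic distance~$3$, namely $x^*$ and $y^*$, and this edge does exist; but it is then the sole chord, contradicting~(1). Hence the nested-neighborhood condition holds and Theorem~\ref{0207a} concludes the proof.

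The main obstacle I expect is the chord accounting: in the forward direction one must confirm that $x_2y_k$ and $x_3y_1$ are truly chords rather than cycle edges (where $k\geq 3$ is essential and small-case coincidences must be ruled out when $2k=6$), and in the reverse direction the six-vertex distinctness check must be made precise so that $x^*y^*$ is indisputably the unique possible chord of the exhibited cycle.
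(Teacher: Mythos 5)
Your proposal is correct and follows essentially the same route as the paper: both directions are reduced to the nested-neighborhood criterion of Theorem~\ref{0207a}, the forward direction uses the chain of neighborhoods to locate an adjacent dominating pair and to produce two chords in long cycles, and the converse builds exactly the same six-cycle whose only chord is the dominating edge. If anything, your treatment is slightly more careful than the paper's, which only spells out the chord count for cycles of length exactly six and takes the dominating pair from the first and last letters of the word rather than from the neighborhood chain.
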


\begin{proof}
	Suppose $G$ is Parikh binary word representable. Without loss of generality, we may assume $G=\mathcal{G}(w)$ for some word $w$ over $\{a<b\}$.
	Since $\mathcal{G}(w)$ is connected, the first letter of $w$ must be $a$ and the last letter of $w$ must be $b$. Furthermore, the vertices of $\mathcal{G}(w)$ corresponding 
	to these two letters are adjacent and their degree sum is the same as the number of vertices of $\mathcal{G}(w)$.
	Now, suppose $C$ is a cycle of length six in $\mathcal{G}(w)$. Exactly three of the vertices in $C$ must belong to $X$, say $x$, $x'$ and $x''$. By Theorem~\ref{0207a},  we may assume
	$N(x)\subseteq N(x')\subseteq N(x'')$. Hence, the two vertices in $C$ adjacent to $x$ also belong to $N(x')$ and $N(x'')$. Then it can be inferred that $C$ has at least two chords.
	
	Conversely, let $x\in X$ and $y\in Y$ be two adjacent vertices  in $\mathcal{G}(w)$ whose degree sum
	is the same as the number of vertices of $G$.
	This implies that $x$ is adjacent to every vertex in $Y$ and $y$ is adjacent to every vertex in $X$. Assume $G$ is not Parikh binary word representable. Then by Theorem~\ref{0207a}, there exist $x',x''\in X$ such that $N(x')\nsubseteq N(x'')$ and 
	$N(x'')\nsubseteq N(x')$. Let $y'\in N(x')\backslash N(x'')$ and
	$y''\in N(x'')\backslash N(x')$. Consider the cycle $C$ with vertices $x,y',x',y,x'',y'',x$ in the given order. Then the edge $(x,y)$ is the only chord of $C$, a contradiction.
\end{proof}

Now, we further extend Theorem~\ref{0207a} to the ternary alphabet.

\begin{theorem}\label{0209a}
	Suppose $G=(X,Y,E)$ is a connected bipartite graph. Then $G$ is Parikh ternary word representable if and only if there exists an ordering on one of the parts, say $Y$, such that for every $x\in X$, $N(x)$ is either an initial segment or an end segment of $Y$.	
\end{theorem}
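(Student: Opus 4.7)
The plan is to prove both implications by explicit construction, analogously to the proof of Theorem~\ref{0207a} for the binary case.

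For the forward direction, assume $G \cong \mathcal{G}(w)$ for some $w$ over $\Sigma_3=\{a_1<a_2<a_3\}$. Since the indices $1$ and $3$ share the same parity, the bipartition of $\mathcal{G}(w)$ places all $a_2$-vertices on one side; designate that side as $Y$ and order it $y_1<y_2<\cdots<y_n$ by the positions of the corresponding $a_2$'s in $w$. By the definition of a Parikh graph, an $a_1$-vertex whose $a_1$ sits at position $p$ in $w$ is adjacent precisely to those $a_2$-vertices whose positions exceed $p$, which is an end segment of $Y$. Symmetrically, the neighborhood of each $a_3$-vertex is an initial segment of $Y$.

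For the converse, suppose $Y=\{y_1<y_2<\cdots<y_n\}$ carries such an ordering. I would first partition $X=X_a\sqcup X_c$ by placing $x\in X_a$ when $N(x)$ is an end segment of $Y$ and $x\in X_c$ when it is an initial segment, handling the overlap case $N(x)=Y$ by an arbitrary assignment. For $1\leq k\leq n$ set
$$A_k=\{\,x\in X_a : N(x)=\{y_k,y_{k+1},\dots,y_n\}\,\}\quad\text{and}\quad C_k=\{\,x\in X_c : N(x)=\{y_1,y_2,\dots,y_k\}\,\},$$
and form the word
$$w = a_1^{|A_1|}\,a_2\,a_3^{|C_1|}\,a_1^{|A_2|}\,a_2\,a_3^{|C_2|}\cdots a_1^{|A_n|}\,a_2\,a_3^{|C_n|}.$$
Define a bijection $\varphi\colon V(G)\to V(\mathcal{G}(w))$ sending $y_k$ to the $k$-th $a_2$-vertex of $\mathcal{G}(w)$ and matching the vertices of $A_k$ (resp.\ $C_k$) bijectively with the $a_1$-vertices (resp.\ $a_3$-vertices) introduced in the $k$-th block. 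An $a_1$ belonging to the $k$-th block precedes in $w$ exactly the $a_2$'s numbered $k,k+1,\dots,n$, so its neighborhood in $\mathcal{G}(w)$ is exactly $\varphi[N(x)]$ for the corresponding $x\in A_k$; the check for $a_3$-vertices is symmetric, and there are no $a_1$-to-$a_3$ edges in $\mathcal{G}(w)$ to interfere. Hence $\varphi$ is an isomorphism.

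I do not expect a serious obstacle: the content of the statement rests on the fact that the only consecutive letter pairs over $\Sigma_3$ are $(a_1,a_2)$ and $(a_2,a_3)$, so the $X$-vertices split cleanly into those whose neighborhoods look like end segments and those whose neighborhoods look like initial segments of the $a_2$-ordering. The only bookkeeping points are the overlap case $N(x)=Y$ (assigned arbitrarily) and the degenerate case $|V(G)|=1$ (take $w=a_1$); connectedness is used only to exclude vertices with empty neighborhoods, which would otherwise stand outside every block.
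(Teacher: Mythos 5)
Your proof is correct and follows essentially the same route as the paper: the forward direction is identical, and your converse construction (one $a_2$ per $Y$-vertex with blocks of $a_1$'s and $a_3$'s placed according to where each neighborhood begins or ends) produces, in effect, a specific instance of the word the paper builds by prescribing its two binary projections $\pi_{a_1,a_2}(w)$ and $\pi_{a_2,a_3}(w)$. The only cosmetic difference is that you index the construction by the $Y$-ordering and avoid the paper's preliminary reduction to the case where both segment types occur.
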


\begin{proof}
Suppose $ G $ is Parikh ternary word representable. Without loss of generality, suppose $ G = \mathcal{G}(w) $ for some word $ w $ over $\{a < b < c\} $ and $ X $ consists of vertices corresponding to the letters in $ w $ which are $ a $ or $ c $. Consider the ordering $ (b, 1), (b, 2), \dots, (b, \vert w \vert_b) $ on $ Y $. It is clear that for every $ x \in X $ that corresponds to a letter in $ w $ which is $ c $ (respectively $ a $), $ N(x) $ is an  initial segment (respectively end segment) of $ Y $ by the definition of Parikh graph.

Conversely, suppose there exists an ordering  $y_1, y_2, \dotsc, y_{\vert Y\vert}$ say on $ Y $ such that for every $ x \in X $, $ N(x) $ is either an initial segment or an end segment of $ Y $. We may suppose both types of segments appear for otherwise $ G $ is Parikh binary word representable by Theorem~\ref{0207a}. Then there is some ordering $ x_1, x_2, \dots, x_k, x'_1, x'_2, \dots, x'_{k'} $ on $ X $ for some $ 1 \leq k,k' < \vert X \vert $ with $ k + k' = \vert X \vert $
such that $ N(x_i) $ is an initial segment of $ Y $ for every $ 1 \leq i \leq k $ with $\emptyset\neq N(x_1) \subseteq N(x_2) \subseteq \dotsb \subseteq N(x_k) $ and  $ N(x'_j) $ is an end segment of $ Y $ for every $ 1 \leq j \leq k' $ with $ \emptyset \neq N(x'_1) \subseteq N(x'_2) \subseteq \dotsb \subseteq N(x'_{k'})$. However,  $N(x_k)$ and $N(x'_{k'})$ need not be $Y$.

Consider the ordered alphabet $\Sigma=\{a<b<c\}$. Let $$v = b^{\vert N(x_1) \vert}cb^{\vert N(x_2) \backslash N(x_1) \vert}c \dotsb b^{\vert N(x_k) \backslash N(x_{k-1}) \vert}cb^{\vert Y \backslash N(x_k) \vert} $$ and let  $$ v' = b^{\vert Y \backslash N(x'_{k'}) \vert}ab^{\vert N(x'_{k'}) \backslash N(x'_{k'-1}) \vert} \dotsb ab^{\vert N(x'_2) \backslash N(x'_1) \vert}ab^{\vert N(x'_1) \vert} .$$ Take any word $ w \in \Sigma^* $ such that $ \pi_{a,b}(w) = v' $ and $ \pi_{b,c}(w) = v$. Consider the mapping $\varphi \colon X\cup Y \rightarrow V(\mathcal{G}(w))$ defined by 
\begin{align*}
 \varphi(x'_i) ={}& (a, i) \text{ for } 1 \leq i \leq k',\\
\varphi(x_i) ={}& (c, i) \text{ for } 1 \leq i \leq k,\\
 \varphi(y_i) ={}& (b, i) \text{ for } 1 \leq i \leq \vert Y \vert. 
\end{align*}
Then it can be verif\mbox{}ied as in the proof of Theorem~\ref{0207a} that $\varphi$ is an isomorphism between $G$ and $\mathcal{G}(w)$.
\end{proof}

The first graph theoretic property investigated in the introductory paper on bipartite permutation graphs is the existence of a Hamiltonian cycle.

\begin{theorem}\cite{spinrad1987bipartite}\label{2607a}
	Suppose $G=(X,Y,E)$ is a connected bipartite permutation graph with $\vert X\vert=\vert Y\vert=k\geq 2$ and 
$(<_X,<_Y)$ is	the corresponding strong ordering, where	
	$<_x:x_1,x_2, \dotsc, x_k$ and $<_Y:y_1,y_2, \dotsc, y_k$.  Then $G$ has a Hamiltonian cycle if and only if $x_i, y_i, x_{i+1}, y_{i+1}, x_i$ is a cycle of length four for each $1\leq i\leq k-1$.	
\end{theorem}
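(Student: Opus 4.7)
The plan is to treat the two directions separately. The sufficiency $(\Leftarrow)$ proceeds by an explicit construction: I would exhibit the Hamiltonian cycle
\[
x_1,\, y_2,\, x_3,\, y_4,\, \dotsc,\, x_2,\, y_1,\, x_1,
\]
which ascends through odd $x$-indices and even $y$-indices up to index $k$, turns at the top, and descends through the remaining vertices to close at $x_1$. Since $G$ is bipartite, the existence of a four-cycle on $\{x_i, y_i, x_{i+1}, y_{i+1}\}$ forces the $K_{2,2}$ structure on $\{x_i, x_{i+1}\} \cup \{y_i, y_{i+1}\}$, so all four edges $(x_a, y_b)$ with $a, b \in \{i, i+1\}$ lie in $E$. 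Each edge used in the proposed traversal has the form $(x_j, y_j)$, $(x_j, y_{j+1})$, or $(x_{j+1}, y_j)$ and hence lies in $E$; a short case split on the parity of $k$ verifies that the traversal visits every vertex exactly once.

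For the necessity $(\Rightarrow)$, I would first prove a structural lemma: in any connected bipartite permutation graph with strong ordering $(<_X, <_Y)$, each $N(x_i)$ is a (possibly empty) interval of $Y$ with respect to $<_Y$, and the minimum and maximum indices $\alpha(x_i), \beta(x_i)$ of this interval are non-decreasing in $i$ (and symmetrically for $N(y_j)$). The interval property is obtained by positing a hypothetical gap $y_a <_Y y_b <_Y y_c$ with $y_a, y_c \in N(x_i)$ and $y_b \notin N(x_i)$, using connectivity to produce a neighbor $x'$ of $y_b$, and applying the defining property of strong ordering to the pair of edges $\{(x', y_b), (x_i, y_a)\}$ when $x' <_X x_i$ or $\{(x', y_b), (x_i, y_c)\}$ when $x_i <_X x'$; in either case the conclusion is $(x_i, y_b) \in E$. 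Monotonicity of $\alpha$ and $\beta$ is a direct application of strong ordering to two edges whose endpoints are ``crossing'' in the wrong way.

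With the lemma in hand, I would deduce the four-cycle condition via a cut/counting argument. Let $H$ be the assumed Hamiltonian cycle, and note that each vertex contributes exactly two edges to $H$. First, to show $(x_i, y_i) \in E$ for every $i$, assume otherwise; the interval property gives $N(x_i) \subseteq \{y_1, \dotsc, y_{i-1}\}$ or $N(x_i) \subseteq \{y_{i+1}, \dotsc, y_k\}$, and monotonicity propagates the constraint to $x_j$ for $j \leq i$ or $j \geq i$ respectively. In the first case, the $2i$ cycle edges incident to $\{x_1, \dotsc, x_i\}$ all terminate in $\{y_1, \dotsc, y_{i-1}\}$, which is incident to only $2(i-1)$ cycle edges in total, forcing $2i \leq 2(i-1)$, a contradiction; the other case is symmetric. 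Next, to establish $(x_i, y_{i+1}) \in E$ for $1 \leq i \leq k-1$, assume this edge is absent; since $(x_i, y_i) \in E$, the interval property forces $N(x_i) \subseteq \{y_1, \dotsc, y_i\}$, and monotonicity extends the constraint to all $x_j$ with $j \leq i$. Setting $S := \{x_1, \dotsc, x_i\} \cup \{y_1, \dotsc, y_i\}$, the $2i$ cycle edges from $\{x_1, \dotsc, x_i\}$ all lie inside $S$ and already exhaust the $2i$ cycle-edge endpoints at $\{y_1, \dotsc, y_i\}$, so no cycle edge crosses the cut $(S, V \setminus S)$, contradicting the fact that $H$ must cross every nontrivial cut at least twice. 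The edge $(x_{i+1}, y_i) \in E$ follows by the symmetric cut argument on $\{x_{i+1}, \dotsc, x_k\} \cup \{y_{i+1}, \dotsc, y_k\}$. The main obstacle, I anticipate, is the clean derivation of the interval-plus-monotonicity lemma from the strong ordering and connectivity; once that is in place, the cut/counting steps are routine bookkeeping.
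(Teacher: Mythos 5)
The paper does not prove this theorem: it is imported from Spinrad, Brandst\"adt and Stewart \cite{spinrad1987bipartite} and used as a black box, so there is no internal proof to compare against. Your argument is, as far as I can check, correct and is essentially the standard proof of this characterization. For sufficiency, the zigzag cycle $x_1,y_2,x_3,\dotsc$ uses only edges of the forms $(x_j,y_j)$, $(x_j,y_{j+1})$ and $(x_{j+1},y_j)$, all supplied by the four-cycle hypothesis, and it visits each vertex exactly once for either parity of $k$. For necessity, your interval-plus-monotonicity lemma is exactly the enclosure structure that a strong ordering forces on a connected bipartite permutation graph; the gap argument needs connectivity only to produce a neighbour of $y_b$, and monotonicity of $\alpha$ and $\beta$ is a one-line application of the strong ordering to the two extreme edges. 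The two counting steps are sound: the $2i$ cycle edges leaving $\{x_1,\dotsc,x_i\}$ are pairwise distinct since each edge has a unique $X$-endpoint, so they cannot all land on $i-1$ vertices of cycle-degree two; and in the cut argument the edge count shows no cycle edge crosses $(S,V\setminus S)$, impossible for a Hamiltonian cycle when both sides are nonempty, which holds because $1\le i\le k-1$. The only points worth making explicit in a final write-up are that connectivity together with $k\ge 2$ guarantees every neighbourhood is nonempty (so $\alpha$ and $\beta$ are defined everywhere) and that the four vertices $x_i,y_i,x_{i+1},y_{i+1}$ are distinct, so the four established edges genuinely form a cycle of length four.
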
	

Suppose $\Sigma=\{a<b\}$ and $w\in \Sigma^*$ with $\vert w\vert_a=\vert w\vert_b$. Using Theorem~\ref{2607a}, it can be deduced that
$\mathcal{G}(w)$ has a Hamiltonian cycle if and only if every proper prefix of $w$ has more number of $a$'s then $b$'s \cite[Theorem~7]{bera2016structural}.
Now, we give the corresponding result for the ternary alphabet.

\begin{theorem}
	Suppose $\Sigma=\{a<b<c\}$ and $w\in \Sigma^*$ with $\vert w\vert_a+\vert w\vert_c=\vert w\vert_b$. 
	Assume $\mathcal{G}(w)$ is connected.
	Then $\mathcal{G}(w)$ has a Hamiltonian cycle if and only if 
	every proper prefix of $w$ has (strictly) more number of $b$'s than $c$'s
	and every proper suffix of $w$ has (strictly) more number of $b$'s than $a$'s.
\end{theorem}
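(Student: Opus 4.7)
The plan is to invoke Theorem~\ref{2607a} applied to the canonical strong ordering supplied by Lemma~\ref{0409b}. The assumption $\vert w\vert_a+\vert w\vert_c=\vert w\vert_b$ forces $\vert X\vert=\vert Y\vert=\vert w\vert_b=:k$, which is the numerical prerequisite of that theorem. Under Lemma~\ref{0409b}, $X$ is enumerated as $x_1,\dotsc,x_k$ with $x_i=(c,i)$ for $1\leq i\leq \vert w\vert_c$ and $x_{\vert w\vert_c+j}=(a,j)$ for $1\leq j\leq \vert w\vert_a$, while $Y$ is enumerated as $y_1,\dotsc,y_k$ with $y_i=(b,i)$.

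By Theorem~\ref{2607a}, a Hamiltonian cycle exists if and only if for every $1\leq i\leq k-1$ the four vertices $x_i,y_i,x_{i+1},y_{i+1}$ form a four-cycle. I would split into three subcases according to the types of $x_i,x_{i+1}$: both are $c$-vertices ($1\leq i\leq \vert w\vert_c-1$); the transition case $i=\vert w\vert_c$ (which requires $\vert w\vert_a\geq 1$ and $\vert w\vert_c\geq 1$); and both are $a$-vertices ($\vert w\vert_c+1\leq i\leq k-1$). In each subcase the four required edges unpack via the definition of the Parikh graph into four straightforward position comparisons, whose strictest member reduces the whole system to the two families
\[
\pos_b(w, i+1) < \pos_c(w, i)\qquad\text{and}\qquad \pos_a(w, j) < \pos_b(w, \vert w\vert_c + j - 1),
\]
for the relevant ranges of $i$ and $j$.

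Finally, I would translate these positional inequalities into the stated prefix and suffix conditions. For the first family, $\pos_b(w, i+1) < \pos_c(w, i)$ is precisely the assertion that the prefix of $w$ ending at the $i$-th $c$ has strictly more $b$'s than $c$'s. Since the function $m \mapsto \vert w_1 \dotsm w_m\vert_b - \vert w_1 \dotsm w_m\vert_c$ is unchanged at $a$'s, goes up at $b$'s, and goes down only at $c$'s, knowing it is strictly positive at every $c$-position propagates to every proper prefix demanded by the statement; the converse is immediate by instantiating at $m=\pos_c(w,i)$. The argument for the $\pos_a$/$\pos_b$ family is symmetric, obtained either directly or by applying the first argument to the reversal of $w$ with the alphabet reversed. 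The step I expect to require the most care is the transition case $i = \vert w\vert_c$, which couples a $\pos_b,\pos_c$ condition with a $\pos_a,\pos_b$ condition, together with the boundary behavior when $\vert w\vert_a=0$ or $\vert w\vert_c=0$; there the connectedness hypothesis on $\mathcal{G}(w)$ is needed to exclude degenerate configurations and to ensure that no condition in the translation is either double-counted or quietly dropped.
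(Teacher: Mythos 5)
Your proposal follows essentially the same route as the paper: apply Theorem~\ref{2607a} to the strong ordering from Lemma~\ref{0409b}, unpack the required four-cycles into the two positional families $\pos_b(w,i+1)<\pos_c(w,i)$ and $\pos_a(w,j)<\pos_b(w,\vert w\vert_c+j-1)$ (the paper states the latter as $(a,\vert w\vert_a-i+1)$ adjacent to $(b,\vert w\vert_b-i)$, which is the same thing), and translate these into the prefix/suffix counting conditions. The details you defer, including the transition four-cycle at $i=\vert w\vert_c$, are handled in the paper exactly as you outline, so the plan is sound.
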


\begin{proof}
	Let $\mathcal{G}(w)=(X,Y,E)$, where $X$ is the part corresponding to the letters $a$ and $c$.
	By Lemma~\ref{0409b}, we know that 
	$(<_X, <_Y)$ is a strong ordering on the vertices of $\mathcal{G}(w)$, where
	$<_X:(c,1)<(c,2)< \dotsb <(c,\vert w\vert_c)<
	(a,1)<(a,2)< \dotsb <(a,\vert w\vert_a)$
	and $<_Y:(b,1)<(b,2)< \dotsb <(b,\vert w\vert_b)$.
	
	Suppose $\mathcal{G}(w)$ has a Hamiltonian cycle.
	By Theorem~\ref{2607a}, $(c,i)$ is adjacent to $(b,i+1)$ for all $1\leq i \leq \vert w\vert_c$. Hence, 	this means that the $i$-th $c$ in $w$ comes after the $(i+1)$-th $b$ in $w$. It follows that	
	 every proper prefix of $w$ has more number of $b$'s than $c$'s. Similarly, by Theorem~\ref{2607a}, $(a,\vert w\vert_a-i+1)$ is adjacent to $(b,\vert w\vert_b -i)$ for all $1\leq i \leq \vert w\vert_a$.  This means that the $i$-th $c$ in $w$ counting from the rear comes before the $(i+1)$-th $b$ in $w$ counting from the rear. It follows that every proper suffix of $w$ has more number of $b$'s than $a$'s. 
	
	Conversely, suppose every proper prefix of $w$ has more number of $b$'s than $c$'s
		and every proper suffix of $w$ has more number of $b$'s than $a$'s. It follows that
	$(c,i)$ is adjacent to $(b,i+1)$ for all $1\leq i \leq \vert w\vert_c$ and $(a,\vert w\vert_a-i+1)$ is adjacent to $(b,\vert w\vert_b -i)$ for all $1\leq i \leq \vert w\vert_a$. Then Theorem~\ref{2607a} can be applied to conclude that $\mathcal{G}(w)$ has a Hamiltonian cycle. (In particular, it can be verified that $(c, \vert w\vert_c),  	
(b, \vert w\vert_{c}), (a,1),
(b, \vert w\vert_c+1), (c, \vert w\vert_c)$ is a cycle of length four in $\mathcal{G}(w)$.)		 
\end{proof}

\section{conclusion}

In this work, we have continued on the pioneering study on Parikh word representable graphs in \cite{bera2016structural} and expanded it soundly beyond the binary alphabet. In fact, our study mostly focuses on the general theory for any ordered alphabets.

We have shown that the class of Parikh word representable graphs is equivalent to the class of bipartite permutation graphs that is a special class of intersection graphs. Using Scheinerman's  characterization \cite{scheinerman1985characterizing}  of 
classes of finite graphs that can be described as classes of intersection graphs,  it is possible to show that the class of Parikh word representable graphs is a 
class of intersection graphs directly, thus it is not surprising that the former class coincides with the class of bipartite permutation graphs.

Finally, although the definition of Parikh graph is originally motivated
by Parikh matrices, no comprehensive study has been made that relates Parikh graphs to Parikh matrices. Therefore, as a possible future direction, one can attempt to address this to some extent.

\section*{Acknowledgments}
The first and last authors gratefully acknowledge the support for this research by Research University Grant No.~1001/PMATHS/8011019 of Universiti Sains Malaysia.


\end{document}